 \newtheorem{theorem}{Theorem}
 \newtheorem{lemma}[theorem]{Lemma}
 \newtheorem{proposition}[theorem]{Proposition}
\theoremstyle{definition}
 \newtheorem{definition}{Definition}
 \newtheorem{remark}{Remark}
\newcommand{\N}{\ensuremath{\mathbb N}} 
\newcommand{\R}{\ensuremath{\mathbb R}} 
\newcommand{\e}{\varepsilon}
\newcommand{\ep}{\varepsilon}
\newcommand \dimh{\mbox{Dim}_H}
\newcommand \sS{\mathcal{S}}
\title[Sojourn times for fractional Brownian motion]{ Sojourn time dimensions of \\ fractional Brownian motion}
\author{Ivan Nourdin}
\address{Ivan Nourdin, Universit\'e du Luxembourg, 
Unit\'e de Recherche en Math\'ematiques,
Maison du Nombre,
6 avenue de la Fonte,
L-4364 Esch-sur-Alzette,
Grand Duch\'e du Luxembourg}
\email{ivan.nourdin@uni.lu} 
\thanks{}
\author{Giovanni Peccati}
\address{Giovanni Peccati, Universit\'e du Luxembourg, 
Unit\'e de Recherche en Math\'ematiques,
Maison du Nombre,
6 avenue de la Fonte,
L-4364 Esch-sur-Alzette,
Grand Duch\'e du Luxembourg}
\email{giovanni.peccati@uni.lu} 
\thanks{}
\author{St\'ephane Seuret }
\address{St\'ephane Seuret, Universit\'e Paris-Est, LAMA (UMR 8050),  UPEMLV, UPEC, CNRS, F-94010, Cr\'eteil, France}
\email{seuret@u-pec.fr }
  \thanks{S. Seuret thanks the RMATH at University of Luxembourg for its support  during his stay in 2017-2018}
\date{\today}
\begin{document}
\maketitle

\medskip

\begin{abstract}
We describe the size of the sets of sojourn times $E_\gamma =\{t\geq 0: |B_t|\leq t^\gamma\}$ associated with a fractional Brownian motion $B$ in terms of various large scale dimensions. \\
{ {\it Keywords: }  Sojourn time; logarithmic density; pixel density; macroscopic Hausdorff dimension; fractional Brownian motion.\\
{\it AMS 2010 Classification: }  60G15; 60G17; 60G18.}
\end{abstract}

\section{Introduction }\label{introduction}

Describing the properties of the sample paths of stochastic processes is { one of the leading threads of modern stochastic analysis}: {such a line of research} started with the investigation of the almost sure continuity properties of the {paths} of a {real-valued} Brownian motion $(B_t)_{t\geq 0}$, {such as H\"older continuity}, followed by the important notions of {\it fast and slow points} {introduced} by Taylor. {See e.g. the three classical references \cite{KS, MP, RY} for formal statements, as well as for an historical overview of this fundamental domain}.

A naturally connected question consists in describing the geometric properties of  the graph of $\{(t,B_t):t\geq 0\}$, in terms of {\it box}, {\it packing} and {\it Hausdorff dimensions} {-- see Section \ref{ss:dimensions} for precise definitons}. {In this respect,} the case of the Brownian motion is \cite{taylor53,taylor55,MP} now very well understood, and many { researchers} have tried, often succesfully, to obtain similar results for other widely used classes of processes: fractional Brownian motions and more general Gaussian processes, L\'evy processes, solutions of SDE or SPDE's (see {\cite {pruitt69,pruitttaylor,ayache,yang,khoshxiao,shiehxiao} for instance, and the numerous references therein). {Despite these remarkable efforts, many important questions in this area are almost completely open for future research.} 

Another description of random trajectories was proposed in terms of {\it sojourn times}. The objective is to describe the (asymptotic) proportion of time spent by a stochastic process in a given region. Sojourn times have been studied by many authors (see for instance \cite{CiesielskiTaylor62,Ray63,uchiyama1982,Bermann91} and the references therein) and play a key role in understanding various features of the paths of stochastic processes, especially { those of} Brownian motion.

In this {paper}, we focus on the sojourn times { associated with the paths of a} fractional Brownian motion (FBM) inside the domain $ \{(t,u): t\geq 0 \mbox{ and } |u|\leq t^\gamma\}$, where $\gamma \geq 0$. It is known that, with probability one, after some large time $t$, an FBM $B:=(B_t)_{t\geq 0}$ does not intersect the domain $ \{(t,u): t\geq 0 \mbox{ and } |u|\geq t^{H+\ep}\}$, for every $\ep>0$. {For this reason, in what follows} we restrict the study to the case $\gamma \in [0,H]$, and investigate the sets
\begin{equation}
\label{defegamma}
E_\gamma :=\{t\geq 0: |B_t|\leq t^\gamma\}
\end{equation}
  in terms of various large scale dimensions: the {\it Lebesgue density}, the {\it logarithmic density}, and the macroscopic {\it box} and {\it Hausdorff dimensions}. {A simulation of the set $E_\gamma$ appears in Fig. 1}.
  
 \begin{figure}
  \includegraphics[width=\linewidth]{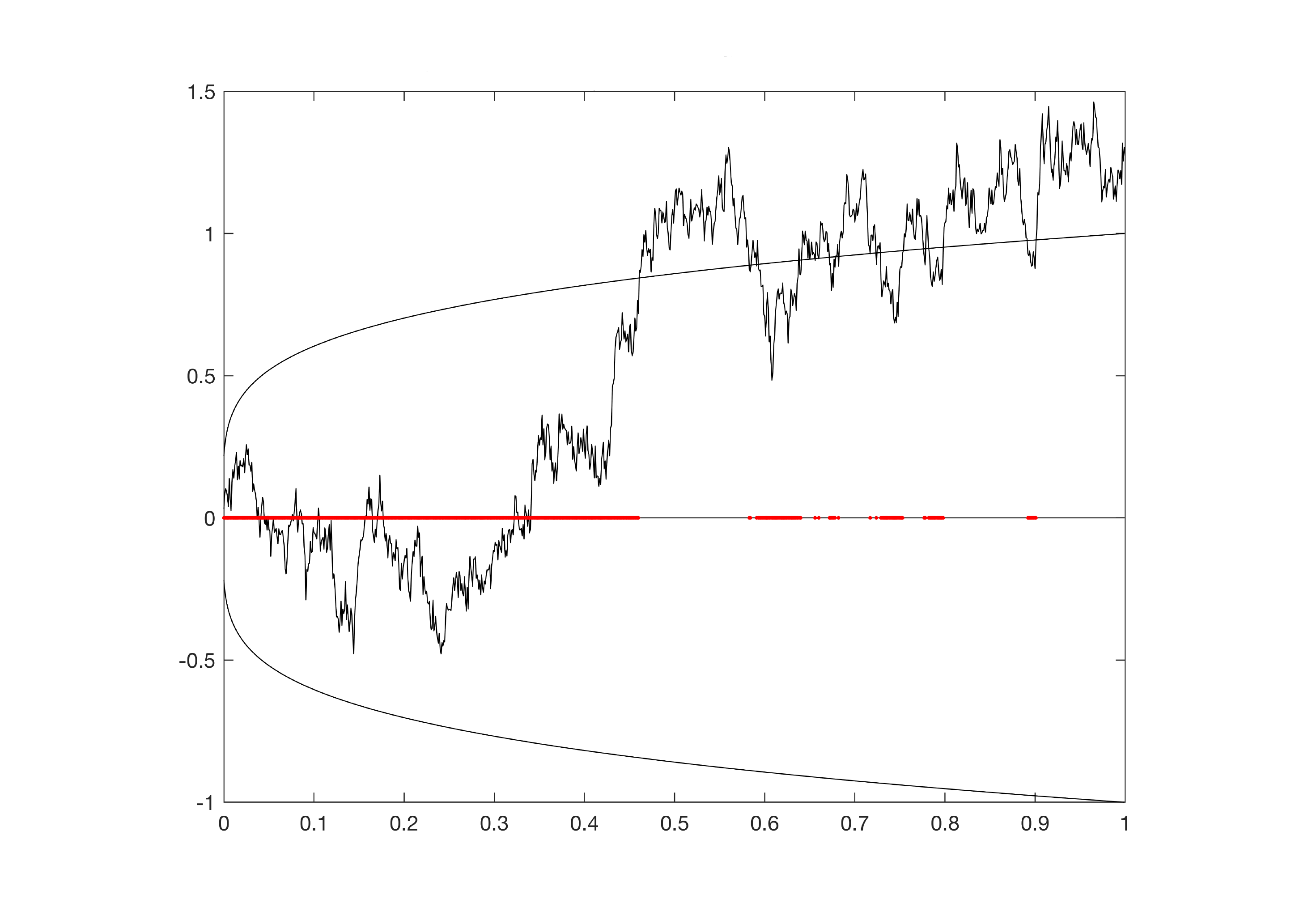}
  \caption{\it The red subset of the real line is a simulation of the set $E_\gamma$, with $\gamma = 0.22$ and $H=0.43$}
\end{figure}

  {The last two notions evoked above} have been introduced in the  late 1980s by Barlow and Taylor (see \cite{barlow1989,barlow1992}), {in order} to {formally define} the fractal dimension of a discrete set. {One of the main motivations for the theory developed in \cite{barlow1989} was e.g. to describe} the asymptotic properties of the trajectory of a random walk on $\mathbb{Z}^2$, {whereas the focus in} \cite{barlow1992} was the computation of the macroscopic Hausdorff dimension of an $\alpha$-stable random walk. Proper definitions are given in the next section. These dimensions have proven to be relevant in other situations, in particular when describing the high peaks of (random) solutions of the stochastic heat equation, see the seminal works of Khoshnevisan, Kim and Xiao \cite{khoshnevisan2015multifractal,khoshnevisanXiao2016}.
    
  \smallskip

The present paper can be seen as a follow-up and a non-trivial extension of  \cite{SX}, where analogous results were obtained in the case of $B$ being a {\em standard} Brownian motion. One of the principal motivations of our analysis is indeed to understand how much the findings of \cite{SX} rely on the {specific features of Brownian motion, such as the (strong and weak) Markov properties, the associated reflection principle, as well as the fine properties of local times}. {While all these features are heavily exploited in \cite{SX}, the novel approach developed in our paper shows that the dimensional analysis of sojourn times initiated in \cite{SX} can be  substantially extended to the non-Markovian setting of a fractional Brownian motion with arbitrary Hurst index. We believe that our techniques might be suitably adapted in order to study sojourn times associated with even larger classes of Gaussian processes or Gaussian fields.} 

\medskip
 
{From now on, every random object considered in the paper is defined on a common probability space $(\Omega, \mathcal{A}, \mathbb{P})$, with $\mathbb{E}$ denoting expectation with respect to $\mathbb{P}$}.

\section{Assumptions and main results }\label{results}

\subsection{Densities and dimensions} \label{ss:dimensions}

In what follows, { the symbol} $`{\rm Leb} `$ stands for the one-dimensional Lebesgue measure.
For any set $A$,  we denote by  $|A|$ its cardinality whereas, 
for any subset $E \subset \mathbb{R}^+$,
\begin{align*}
\mbox{pix}(E) = \{n \in \mathbb{N}:  \mbox{dist}(n, E) \leq 1 \}
\end{align*}
is the set of integers that are at distance less than one { from} $E$.  It is clear that 
${\rm Leb}(E)\leq  |\mbox{pix}(E)|$, while the converse inequality does not hold in general.

\medskip

{ We will now describe the main notions and concepts that are used in this paper}, in order to describe the size of the set of sojourn times $E_\gamma= \{t\geq 0: |B_t|\leq t^\gamma\}$. 

\medskip

{ The simplest way of assessing the size of} $E_\gamma$ simply consists in estimating how fast the Lebesgue measure of $E_\gamma\cap [0,t]$ grows with $t$.
  For a general set $E\subset \R^+$, this { yields} the following definition.
  \begin{definition}
{\it
Let $E\subset\R^+$. The \emph{logarithmic density} of $E$ is defined { as}
\begin{eqnarray*}
\mbox{\rm Den}_{\log} E  & =  & \limsup_{n\to+\infty} \frac{\log_2 {\rm Leb}(E\cap [1,2^n])}{n}.
\end{eqnarray*}
}
\end{definition}
This notion will be compared with a similar quantity, { obtained by replacing the Lebesgue measure of a given subset of $E$ by the cardinality of its pixel set. }
\begin{definition}
{\it
Let $E\subset\R^+$.  The {\em pixel density} of $E$ is defined by
\begin{eqnarray*}
\mbox{\rm Den}_{pix} \, E  & =  & \limsup_{n\to+\infty} \frac{\log_2  |\mbox{\rm pix}(E\cap [1,2^n])|}{n}\\
\end{eqnarray*}
}
\end{definition}
\medskip

The last notion we will deal with is the {\em macroscopic  Hausdorff dimension},   introduced by Barlow and Taylor (as {discussed} above), in order to quantify a sort of ``fractal'' behavior of self-similar structures sitting on infinite lattices.

Following the notations of  \cite{khoshnevisan2015multifractal,khoshnevisanXiao2016}, we consider the   annuli   $\sS_0 = [0,1)$ and $\sS_n = [2^{n-1}, 2^{n})$, for $n\geq 1$. For any $\rho \geq 0$,  any set $E\subset \R^+$ and any $n\in\N^*$, we define
 \begin{align}
\label{def_nu}  
 \nu_\rho^n (E) =  \inf \left\{   \sum_{i=1}^m    \left( \frac{{\rm Leb}(I_i) }{2^{n }}\right)^\rho :  m\geq1, \  I_i\subset \sS_n, \ \  E\cap \sS_n \subset  \bigcup_{i=1}^m I_i  \right\} ,
  \end{align} 
where  $ I_i $ are  non-trivial intervals with  integer boundaries (hence their length is always greater or equal than 1). The infimum is thus taken over a finite number of  finite families  of  non-trivial intervals.
 
\begin{definition} 
{\it
Let $E\subset \R^+$.  The {\rm macroscopic Hausdorff dimension} of $E$ is defined as
\begin{equation} \label{defdimH}
{\rm Dim}_H E =  \inf\left\{ \rho\ge 0 :  \sum_{n\ge 0} \nu^n_\rho(E)<+\infty   \right\}. 
\end{equation}
}
\end{definition}
Observe that $\dimh E\in [0,1]$ for any $E\subset \R^+$: indeed,  choosing as covering of $E\cap \mathcal{S}_n $ the intervals of length 1 partitioning $\mathcal{S}_n$, we get  $\nu_{1+\ep}^n(E) \leq 2^{-n\ep }$ for any $\ep>0$, so that $\sum_{n\e 0} \nu^n_{1+\ep}(E)<+\infty  $.

The macroscopic Hausdorff dimension ${\rm Dim}_H E$ of $E\subset \mathbb{R}^+$ does not depend on    its  bounded subsets, since the series in \eqref{defdimH} converges if and only if its tail series converges.   In particular, every bounded set $E$ has a macroscopic Hausdorff dimension equal to zero - the converse is not true, for instance $\dimh \bigcup_{n\geq 1} \{2^n\} = 0$.

Observe also that the local structure of $E$ does not really influence the value of ${\rm Dim}_H (E)$, since the "natural" scale at which $E$ is observed  is 1. 

The value of $\dimh E$ describes the asymptotic distribution of $E\subset \R^+$ on $\R^+$. The difference between $\dimh E$  and the previously introduced dimensions is  that while $\mbox{\rm Den}_{pix} \, E $ (or $\mbox{\rm Den}_{log} \, E $)  only counts the number of points of $E\cap \mathcal{S}_n$ (or, equivalently, {measures} $E \cap [1, 2^n]$), {the quantity} $\dimh E$ takes into account the geometry of the set $E$, {in particular} by considering the most efficient covering of $E\cap \mathcal{S}_n$. For instance, as an intuition, the value of $\nu_n^\rho(E)$ is large when  all the points of $E\cap \mathcal{S}_n$    are more or less uniformly distributed in $\mathcal{S}_n$, while it is much smaller when these points are all located in the same region (in that case,  one large interval is the best possible covering).

Standard inequalities { exploited in our paper} are ({ see} \cite{barlow1989,khoshnevisan2015multifractal})
 \begin{equation}
 \label{ineq_dimensions}
 {\rm Dim}_H E \leq \mbox{\rm Den}_{pix } \,  E  \ \mbox{ and } \ \mathrm{Den}_{\log} E \leq \mbox{\rm Den}_{pix } \,  E.
 \end{equation}
These inequalities are strict in general, in particular the first one will be strict for the sets we focus on in this paper.

\subsection{Fractional Brownian motion} \label{sec22}

Throughout the paper, $B=(B_t)_{t\geq 0}$ denotes a one-dimensional 
fractional Brownian motion (FBM) of index $H\in(0,1)$.
This means that $B$ is a { continuous Gaussian process}, centered, self-similar of index $H$, and with stationary increments. All these properties {(in particular, the fact that one can always select a continuous modification of $B$)} are simple consequences of the following expression for its covariance function $R$:
$$
R(u,v)=\mathbb{E}[B_uB_v]=\frac12\big(u^{2H}+v^{2H}-|v-u|^{2H}\big).
$$
{ One can easily check that}
\begin{equation}\label{I}
 I:= \iint_{[0,1]^2} \frac{du\, dv}{\sqrt{R(u,u)R(v,v)- R(u,v)^2}} <+\infty.
\end{equation}
{ By virtue of this fact}, the local time $(L_t^x)_{x\in\R,t\geq 0}$ associated with $B$ is well defined in $L^2(\Omega)$ by the following { integral relation}:
\begin{equation}
 \label{deflocal}
 L_t^x = \frac{1}{2\pi}  \int_\mathbb{R} dy\,e^{-iyx} \int_s^t du\,e^{iyB_u},
\end{equation}
see e.g. \cite{Berman69}.
For each $t$, the local time { $x\mapsto L_t^x$} is the density of the occupation measure $\mu_t(A) = {\rm Leb} \{s\in [0,t]: B_s\in A\}$ associated with $B$. Otherwise stated, one has that $L_t = \frac{d\mu_t}{d {\rm Leb}}$ .

A last property that we will need { in order to conclude our proofs}, and that is  an immediate consequence of the Volterra representation of $B$, is that  the natural  filtration associated with FBM
is Brownian. By this, we mean
that there exists a standard Brownian motion $(W_u)_{u\geq 0}$ defined on the same probability space than $B$ such that its filtration satisfies 
\begin{equation}\label{W}
\sigma\{B_u:u\leq t\}  \ \subset \ \sigma\{W_u:u\leq t\}.
\end{equation}
for all $t>0$.

\subsection{Our results} { Let the notation of the previous sections prevail (in particular $B$ denotes a FBM of index $H\in (0,1)$).} The first result { proved in this paper} concerns the logarithmic and macroscopic densities of the sojourn times $E_\gamma$, as defined in \eqref{defegamma}.
\begin{theorem}
\label{th1}
Fix $\gamma \in [0, H)$. Then
\begin{equation}
 \label{equal1}
 \mbox{\rm Den}_{pix } \,  E_\gamma  = \mbox{\rm Den}_{\log} E_\gamma = \gamma+1-H\quad\mbox{a.s.}
\end{equation}
\end{theorem}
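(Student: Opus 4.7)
By the inequality $\mathrm{Den}_{\log}E\leq \mathrm{Den}_{pix}E$ recalled in \eqref{ineq_dimensions}, it suffices to establish the upper bound $\mathrm{Den}_{pix}E_\gamma\leq \gamma+1-H$ and the lower bound $\mathrm{Den}_{\log}E_\gamma\geq \gamma+1-H$, both almost surely. For the upper bound, fix $k\in\mathbb N$ and use the inclusion
\[
\{k\in\mathrm{pix}(E_\gamma)\}\ \subset\ \bigl\{|B_k|\leq 2k^\gamma\bigr\}\cup \Bigl\{\sup_{|t-k|\leq 1}|B_t-B_k|\geq \tfrac12\max(k^\gamma,1)\Bigr\}.
\]
Self-similarity gives $\mathbb P(|B_k|\leq 2k^\gamma)=\mathbb P(|B_1|\leq 2k^{\gamma-H})\leq Ck^{\gamma-H}$, and the Borell-TIS inequality applied to the Gaussian supremum yields a super-polynomially small tail for the oscillation term. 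Summing over $k\leq 2^N$ yields $\mathbb E\bigl[|\mathrm{pix}(E_\gamma\cap[1,2^N])|\bigr]\leq C\cdot 2^{N(\gamma+1-H)}$, and a standard Markov plus Borel-Cantelli argument along the sequence $(2^N)_{N\geq 1}$ delivers the almost sure upper bound.

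For the lower bound, set $X_N:=\mathrm{Leb}(E_\gamma\cap[2^{N-1},2^N])$. Self-similarity together with $\mathbb P(|B_1|\leq u)\sim\sqrt{2/\pi}\,u$ as $u\to 0^+$ gives $\mathbb E[X_N]\sim c\cdot 2^{N(\gamma+1-H)}$. The second moment is where the absence of Markovianity first matters: using the strong local nondeterminism of FBM one obtains the pointwise density estimate $p_{(B_s,B_t)}(0,0)\leq C_H\bigl(s^H(t-s)^H\bigr)^{-1}$ for $0<s<t$; combining with $\mathbb P(|B_s|\leq s^\gamma,|B_t|\leq t^\gamma)\leq 4s^\gamma t^\gamma\,p_{(B_s,B_t)}(0,0)$ and integrating over $[2^{N-1},2^N]^2$, split into the regimes $t-s\leq s$ and $t-s>s$, produces $\mathbb E[X_N^2]\leq C\cdot 2^{2N(\gamma+1-H)}$. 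Paley-Zygmund then yields $\mathbb P(X_N\geq c'\cdot 2^{N(\gamma+1-H)})\geq c''>0$ for all large $N$, and Fatou's lemma for events promotes this to $\mathbb P(\mathrm{Den}_{\log}E_\gamma\geq \gamma+1-H)>0$.

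To upgrade to probability one, I would observe that $\mathrm{Den}_{\log}E_\gamma$ is measurable with respect to the tail $\sigma$-algebra $\mathcal T_B:=\bigcap_{T\geq 0}\sigma(B_t:t\geq T)$ (since it ignores any bounded initial piece of $E_\gamma$), and prove the triviality of $\mathcal T_B$ via the Lamperti stationarization $Z_\tau:=e^{-H\tau}B_{e^\tau}$. An explicit covariance computation shows that $\mathrm{Cov}(Z_0,Z_\tau)$ decays exponentially as $\tau\to\infty$, so by Maruyama's theorem $Z$ is strongly $\alpha$-mixing; the usual argument (a tail event of a stationary $\alpha$-mixing process is asymptotically independent of itself) shows that its tail $\sigma$-algebra is trivial, and the logarithmic time change $\tau=\log t$ identifies this tail with $\mathcal T_B$. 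The main obstacle is precisely this upgrade: because $\mathbb E[X_N^2]$ is of the same order as $(\mathbb E X_N)^2$, concentration cannot be obtained through a direct $L^2$ argument, and one genuinely needs the 0-1 ingredient supplied by the mixing of the Lamperti process. This is where the proof departs most significantly from the Brownian case \cite{SX}, in which the Markov and reflection principles supply independence across disjoint time intervals directly.
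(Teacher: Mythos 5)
Your upper bound and the second--moment/Paley--Zygmund part of your lower bound follow the paper's proof essentially verbatim: the paper likewise bounds $\mathbb{E}\,|\mbox{pix}(E_\gamma)\cap[1,2^n]|$ by splitting each event $\{\exists s\in[m-1,m+1]:|B_s|\le s^\gamma\}$ into a one-point Gaussian estimate plus an oscillation term controlled by Borell--TIS (Lemma \ref{lemkey}), and it bounds $\mathbb{E}[S_\gamma(t)^2]$ by integrating the joint Gaussian density of $(B_u,B_v)$, the finiteness of the integral in \eqref{I} playing exactly the role of your local-nondeterminism lower bound on $\det\Sigma_{u,v}$. (One small point, common to both arguments: the oscillation estimate degenerates at $\gamma=0$, where the rescaled threshold $\ep^{-\gamma}$ is no longer large; that case should be disposed of separately by monotonicity, since $E_0\subset E_{\gamma'}$ up to a bounded set for every $\gamma'>0$ and none of the densities sees bounded sets.)

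The genuine divergence is the upgrade from positive probability to probability one, and there your diagnosis is correct but your cure is not. You rightly observe that Paley--Zygmund plus reverse Fatou only yields $\mathbb{P}(\mathrm{Den}_{\log}E_\gamma\ge\gamma+1-H)>0$; the paper's one-line appeal to ``the Borel--Cantelli lemma'' at this point is too terse, since the events $\{S_\gamma(2^n)\ge c2^{n(\gamma+1-H)}\}$ are not independent. However, your route to tail triviality has a broken link: Maruyama's theorem states that a stationary Gaussian process is mixing in the \emph{ergodic-theoretic} sense if and only if its covariance tends to zero, and neither this nor pointwise covariance decay implies $\alpha$-mixing of the past and future $\sigma$-algebras (for Gaussian processes, $\alpha$-mixing is a genuinely stronger spectral condition, by Kolmogorov--Rozanov); and without $\alpha$-mixing the ``tail event asymptotically independent of itself'' argument does not run. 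The paper's own mechanism, deployed in Section \ref{proof3} for Theorem \ref{th3}, is simpler and airtight and applies verbatim here: the event $\{\mathrm{Den}_{\log}E_\gamma\ge\gamma+1-H\}$ lies in $\bigcap_{T>0}\sigma(B_t:t\ge T)$; the time inversion $\widetilde B_u=u^{2H}B_{1/u}$ of \eqref{defbtilde} identifies this with the germ $\sigma$-field of $\widetilde B$ at $0^+$, which by the Volterra embedding \eqref{W} is contained in the germ $\sigma$-field of a standard Brownian motion and is therefore trivial by Blumenthal's $0$--$1$ law. Replace your mixing argument by this step and your proof is complete.
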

 
\medskip 
 
Our second theorem deals with the macroscopic Hausdorff dimension of all sets $E_\gamma$.

\begin{theorem}
\label{th2}
Fix $\gamma \in [0, H)$. 
Then
\begin{equation}
 \label{up2}
  \mbox{\rm Dim}_{H} E_\gamma   =  1-H\quad\mbox{a.s.}
\end{equation}
\end{theorem}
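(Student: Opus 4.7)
\emph{Strategy.} The goal is to prove the matching bounds $\dimh E_\gamma \leq 1-H$ and $\dimh E_\gamma \geq 1-H$ almost surely. Note that Theorem~\ref{th1} combined with \eqref{ineq_dimensions} only yields the weaker bound $\dimh E_\gamma \leq \gamma + 1 - H$: the genuine improvement comes from exploiting the geometric clustering of $E_\gamma$ near the zero set of $B$, which is invisible to pixel or logarithmic densities but is rewarded by the macroscopic Hausdorff framework, where one may cover by intervals longer than the base scale $1$.

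\emph{Lower bound.} I would build a Frostman measure on $E_\gamma$ from the local time $L^0=(L^0_t)_{t\geq 0}$ of $B$ at zero, whose associated Stieltjes measure $dL^0_t$ is supported on $Z:=\{t:B_t=0\}\subseteq E_\gamma$. By self-similarity of index $H$, one has $L^0_{2^n\cdot}\stackrel{d}{=}2^{n(1-H)}\tilde L^0$ for the local time $\tilde L^0$ of a standard FBM, so $dL^0(\mathcal{S}_n)=2^{n(1-H)}V_n$ with $V_n\stackrel{d}{=}\tilde L^0_1-\tilde L^0_{1/2}>0$ a.s. Classical $(1-H-\ep)$-H\"older regularity of $t\mapsto L^0_t$ (see \cite{Berman69}) rescales to $dL^0(I)\leq C_n\,2^{n(1-H)}(|I|/2^n)^{1-H-\ep}$ for every interval $I\subseteq\mathcal{S}_n$. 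A standard Frostman-type lemma for the macroscopic Hausdorff dimension (\cite{khoshnevisan2015multifractal,barlow1989}) then yields $\nu_\rho^n(E_\gamma)\geq V_n/C_n$ at $\rho=1-H-\ep$. Moment bounds on $V_n$ and $C_n$ combined with Borel--Cantelli ensure $\sum_n\nu_\rho^n(E_\gamma)=+\infty$ a.s., whence $\dimh E_\gamma\geq 1-H$ after letting $\ep\to 0$.

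\emph{Upper bound.} I would cover $E_\gamma\cap\mathcal{S}_n$ by integer-endpoint intervals of length $L_n\asymp 2^{n\gamma/H}$, retaining only those meeting $E_\gamma$. Since $\gamma<H$ and $B$ is H\"older of any order $H-\ep$, its variation on any such interval is of order $L_n^{H}\asymp 2^{n\gamma}$, so an interval meets $E_\gamma$ essentially when it contains a zero of $B$ in a slight enlargement. Rescaling via $B_t=2^{nH}\tilde B_{t/2^n}$, this becomes a covering problem for the zero set of the standard FBM $\tilde B$ on $[1/2,1)$ by intervals of length $\ell:=L_n/2^n=2^{n(\gamma/H-1)}$. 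Since the zero set of FBM has classical Hausdorff dimension $1-H$, the number of such intervals is at most $\ell^{-(1-H)-\ep}$ with high probability, by second-moment estimates on the local time (equivalently, on the number of $\ell^H$-excursions of $\tilde B$). The resulting cost in \eqref{def_nu} is
\[
\ell^{-(1-H)-\ep}\cdot\ell^\rho \;=\; 2^{-n(1-\gamma/H)[\rho-(1-H)-\ep]},
\]
which is summable in $n$ as soon as $\rho>1-H+\ep$, since $\gamma/H<1$. Hence $\dimh E_\gamma\leq 1-H$ a.s.

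\emph{Main obstacle.} The hardest part is the quantitative covering bound used in the upper step: the pixel density being $\gamma+1-H$ shows that unit-length covers cannot suffice, so one must genuinely exploit the macroscopic clustering of $E_\gamma$ inside the zero set of $B$ at scale $L_n^H$. Converting this geometric intuition into uniform almost-sure estimates requires moment control of both the H\"older constants of $B$ and of the number of $\ell^H$-excursions of $\tilde B$---quantities without any clean independence structure in the non-Markovian FBM setting---combined with Chebyshev and Borel--Cantelli to pass from probabilistic to almost-sure bounds. The Volterra representation \eqref{W} will likely play a role in obtaining the needed tail estimates.
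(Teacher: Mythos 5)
Your upper bound is essentially the paper's argument: you cover $\sS_n$ by intervals of length $2^{n\gamma/H}$, estimate the probability that one such interval meets $E_\gamma$ by roughly $\ell^{H-\ep}$ with $\ell=2^{n(\gamma/H-1)}$ (this is exactly Lemma~\ref{lemkey}, via a small-ball bound plus Borell--TIS after rescaling), and sum. The paper organizes this as a direct first-moment bound on $\mathbb{E}[\nu^n_\rho(E_\gamma)]$ followed by Fubini, which is slightly cleaner than your ``count the retained intervals with high probability'' formulation, but the two are interchangeable. Your framing of the lower bound is also structurally the right one --- the local time at $0$ restricted to $\sS_n$ acts as a Frostman measure on $\mathcal{L}_0\subset E_\gamma$, and the uniform H\"older regularity of $L^0$ converts any covering of $\mathcal{L}_0\cap\sS_n$ into a lower bound on its $\nu^n_\rho$-cost; this is precisely Lemma~\ref{lem_minor} (the paper uses Xiao's sharp modulus $h^{1-H}(n-\log_2 h)^H$ together with a log-corrected gauge $\tilde\nu^n_{1-H}$ to get a single a.s.\ constant valid for all $n$, where you take an $\ep$-loss in the exponent instead).

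The genuine gap is in how you conclude that $\sum_n \nu^n_\rho(E_\gamma)$ diverges. You assert that $V_n\overset{d}{=}\tilde L^0_1-\tilde L^0_{1/2}>0$ a.s.; this is false. With positive probability an FBM does not return to $0$ on $[1/2,1]$ (e.g.\ on the event $\inf_{[1/2,1]}B>0$), so each increment $L^0_{2^n}-L^0_{2^{n-1}}$ vanishes with positive probability. Consequently the divergence of $\sum_n V_n/C_n$ is not automatic, and ``moment bounds combined with Borel--Cantelli'' cannot deliver it: the second Borel--Cantelli lemma needs independence, and the normalized increments $Y^0_n=(L^0_{2^n}-L^0_{2^{n-1}})/2^{n(1-H)}$ are strongly dependent in the non-Markovian FBM setting. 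A Paley--Zygmund bound gives only $\mathbb{P}(Y^0_n>c)\geq c'$ for each $n$, which does not imply that infinitely many such events occur a.s. This is exactly the crux of the paper's proof (Proposition~\ref{propproba1}): one first shows $\mathbb{P}(\sum_n Y^0_n=+\infty)>0$ by a Jeulin-type argument, and then upgrades this to probability one by a $0$--$1$ law obtained through the time-inversion $\widetilde B_u=u^{2H}B_{1/u}$, which turns $\{\sum_n \widetilde Y^0_n=+\infty\}$ into a germ event at time $0^+$ for the Brownian filtration underlying the Volterra representation \eqref{W}, so that Blumenthal's law applies. Without some substitute for this step your lower bound does not close.
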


%

The fact that the macroscopic box and Hausdorff dimension differ asserts that the trajectory enjoys some specific geometric properties. This can be interpreted by the fact that the set $E_\gamma$ is not uniformly distributed (if it were, then both dimensions would coincide), which relies on the intuition that the trajectory of an FBM  does not fluctuate too rapidly from one region to the other.

\medskip 
 
Actually,  the lower bound for the dimension $\mbox{\rm Dim}_{H} E_\gamma   \geq  1-H $ in Theorem \ref{th2} will follow from the next { statement}, which  evaluates the dimension of the level sets
\begin{equation}
\label{deflevel}
\mathcal{L}_x:=\{t: B_t=x\}
\end{equation}
and which is of independent interest.

\begin{theorem}
\label{th3}
Fix $x \in \mathbb{R}$. Then
\begin{equation}
 \label{up3}
  \mbox{\rm Dim}_{H} \mathcal{L}_x   =  1-H\quad\mbox{a.s.}
\end{equation}
\end{theorem}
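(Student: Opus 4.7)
The plan is to prove the upper and lower bounds on $\dimh \mathcal{L}_x$ separately, both times exploiting the self-similarity $(2^{-nH}B_{2^n t})_{t\ge 0}\stackrel{d}{=}(B_t)_{t\ge 0}$.

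\textbf{Upper bound $\dimh \mathcal{L}_x\le 1-H$.} I would cover $\mathcal{L}_x\cap \sS_n$ by the unit integer intervals $[k,k+1)$, $2^{n-1}\le k<2^n$, that actually meet $\mathcal{L}_x$. Writing $N_n$ for their number, $\nu_\rho^n(\mathcal{L}_x)\le N_n\cdot 2^{-n\rho}$. The event $[k,k+1)\cap \mathcal{L}_x\ne\emptyset$ forces $x$ to lie in the range of $B$ on $[k,k+1]$; since $B_k\sim\mathcal{N}(0,k^{2H})$ has density at $x$ bounded by $Ck^{-H}$ and the unit-scale fluctuation $\sup_{[k,k+1]}|B_t-B_k|$ has Gaussian tails (Borell--TIS), this probability is at most $Ck^{-H}$. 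Summing gives $\mathbb{E}[N_n]\lesssim 2^{n(1-H)}$; a second-moment bound (using two-point Gaussian densities for $(B_j,B_k)$ derived from the covariance $R$) together with Borel--Cantelli yields $N_n\le n^2\cdot 2^{n(1-H)}$ a.s.\ for large $n$. Hence $\nu_\rho^n\le n^2\cdot 2^{n(1-H-\rho)}$, which is summable for every $\rho>1-H$.

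\textbf{Lower bound $\dimh\mathcal{L}_x\ge 1-H$.} I would apply the macroscopic analogue of Frostman's mass distribution principle to the occupation measure $\mu=dL^x_{\cdot}$, which is carried by $\overline{\mathcal{L}_x}$. Fix $\rho<\alpha<1-H$, and combine two self-similar estimates. First, a lower bound on total mass: $(L^x_{2^n}-L^x_{2^{n-1}})/2^{n(1-H)}\stackrel{d}{=}L^{x/2^{nH}}_1-L^{x/2^{nH}}_{1/2}$, and because $L^0_1-L^0_{1/2}>0$ almost surely together with the joint continuity of the FBM local time in $(t,y)$, one gets $\mathbb{P}(L^x_{2^n}-L^x_{2^{n-1}}\ge c\cdot 2^{n(1-H)})\ge p>0$ uniformly in $n$. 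Second, a H\"older control on local-time increments: by the classical $\alpha$-H\"older regularity of FBM local times for every $\alpha<1-H$ (a consequence of local nondeterminism, see e.g.\ Xiao and references therein), there exist random variables $\tilde K_n\stackrel{d}{=}\tilde K$ with $\mathbb{E}[\tilde K^p]<\infty$ for all $p\ge 1$, such that $L^x_b-L^x_a\le \tilde K_n\cdot 2^{n(1-H-\alpha)}(b-a)^\alpha$ for every integer interval $[a,b]\subset\sS_n$ with $b-a\ge 1$. For any integer-interval cover $\{I_i\}$ of $\mathcal{L}_x\cap\sS_n$ with $|I_i|\ge 1$, using $|I_i|\le 2^{n-1}$,
$$L^x_{2^n}-L^x_{2^{n-1}}\le\sum_i(L^x_{\sup I_i}-L^x_{\inf I_i})\le \tilde K_n\cdot 2^{n(1-H-\alpha)}\sum_i|I_i|^\alpha\le \tilde K_n\cdot 2^{n(1-H-\rho)}\sum_i|I_i|^\rho,$$
so $\nu_\rho^n(\mathcal{L}_x)\ge (L^x_{2^n}-L^x_{2^{n-1}})/(\tilde K_n\cdot 2^{n(1-H)})\ge c/\tilde K_n$ on the intersection of the two good events, which has probability bounded below uniformly in $n$. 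A Borel--Cantelli-type argument, facilitated by the Brownian filtration property (\ref{W}) to extract near-independence across well-separated annuli (by conditioning on $B|_{[0,2^{n-1}]}$ via the Volterra representation), then gives $\sum_n\nu_\rho^n=+\infty$ a.s., whence $\dimh\mathcal{L}_x\ge 1-H$.

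\textbf{Main obstacle.} The crux is promoting the positive-probability lower bound $\nu_\rho^n\ge c/\tilde K_n$ to almost-sure divergence of $\sum_n\nu_\rho^n$. Unlike the Brownian case of \cite{SX}, where the strong Markov property immediately decouples the contributions from disjoint time intervals, the long-range covariance of FBM (for $H\ne 1/2$) correlates the good events across different $n$. The plan is to lift everything to the Brownian filtration of (\ref{W}), condition on $B|_{[0,2^{n-1}]}$, and run a conditional second-moment (Kochen--Stone style) argument along a sparse subsequence $(n_k)$ chosen so that the residual Volterra-kernel couplings decay quickly enough. A secondary technical subtlety is the sharp self-similar rescaling of the H\"older modulus of $L^y_\cdot$ uniformly for $y$ in a neighborhood of $0$, needed so that the constants $\tilde K_n$ share a common stationary distribution; this is manageable because $x/2^{nH}\to 0$.
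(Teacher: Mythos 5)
Your overall architecture matches the paper's: the upper bound via a covering/first-moment estimate (your direct count of unit intervals meeting $\mathcal{L}_x$ is a sound variant of the paper's route, which instead deduces $\mbox{\rm Dim}_{H}\mathcal{L}_x\le \mbox{\rm Den}_{pix}E_\gamma\le 1-H+\gamma$ from Theorem \ref{th1} and the inclusion $\mathcal{L}_x\subset E_\gamma$), and the lower bound via the local time as a Frostman measure combined with Xiao's H\"older regularity of $L^x_\cdot$. Your reduction $\nu_\rho^n(\mathcal{L}_x)\ge c\,(L^x_{2^n}-L^x_{2^{n-1}})/(\tilde K_n 2^{n(1-H)})$ for $\rho<1-H$ is essentially the paper's Lemma \ref{lem_minor} (the paper works at the critical exponent $\rho=1-H$ with a logarithmically corrected gauge $\tilde\nu$, while you lose an $\eta$ in the exponent to absorb the log; both are fine). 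One small point: you need the H\"older constants to be bounded uniformly in $n$, not merely identically distributed — Xiao's theorem does give a single a.s. finite constant, as in the paper's \eqref{bound1} — otherwise a polynomially growing $\tilde K_n$ could kill the divergence of the series.

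The genuine gap is the step you yourself flag as the crux: proving that $\sum_n (L^x_{2^n}-L^x_{2^{n-1}})/2^{n(1-H)}=+\infty$ \emph{almost surely}, not just with positive probability. Your plan (conditional second-moment/Kochen--Stone along a sparse subsequence, after conditioning on $B|_{[0,2^{n-1}]}$ through the Volterra kernel) is not carried out, and as stated it would not suffice: Kochen--Stone yields only a positive lower bound on $\mathbb{P}(\limsup A_n)$ unless one proves genuine asymptotic pairwise decorrelation of local-time increments across distant annuli, which is exactly what the long-range dependence of FBM obstructs and which you do not establish. The paper circumvents all decorrelation estimates with two ideas absent from your proposal. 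First, a Jeulin-type argument (Lemma \ref{probapositive}): since the normalized increments $Y^x_n=(L^{x2^{nH}}_{2^n}-L^{x2^{nH}}_{2^{n-1}})/2^{n(1-H)}$ are identically distributed with $\mathbb{P}(Y^x_0>0)>0$, a Fubini computation on $\{F^x_\infty\le\gamma\}$ forces $\mathbb{P}(F^x_\infty=+\infty)\ge\mathbb{P}(Y^x_0>0)>0$, with no independence whatsoever. Second, and decisively, the time-inversion $\widetilde B_u=u^{2H}B_{1/u}$ is again an FBM and maps the tail event $\{\sum_n Y^x_n=+\infty\}$ (for $\widetilde B$) into an event of the germ $\sigma$-field $\bigcap_{M}\sigma\{B_u:u\le 2^{-M}\}\subset\bigcap_M\sigma\{W_u:u\le 2^{-M}\}$, where $W$ is the Brownian motion generating the Volterra representation \eqref{W}; Blumenthal's $0$--$1$ law then upgrades the positive probability to $1$. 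Without this (or an equivalent zero-one argument), your proof of the lower bound is incomplete.
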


The {connection between Theorem \ref{th2} and Theorem \ref{th3} can be heuristically understood by observing that, when $t$ is large { and for a fixed $x$}}, { the relation} $t\in \mathcal{L}_x $ implies that $t\in E_\gamma$, and therefore $\mbox{\rm Dim}_{H} E_\gamma   \geq \mbox{\rm Dim}_{H} \mathcal{L}_x $, {owing to the fact that, as explained above, the quantity $\mbox{\rm Dim}_{H} A$ does {\it not} depend on the bounded subsets of a given $A\subset \R_+$}. 
 
\section{Proof of Theorem  \ref{th1} : values  of $\mbox{\rm Den}_{pix } \,  E_\gamma  $ and $ \mbox{\rm Den}_{\log} E_\gamma $}
\label{proof1}

In what follows, $C>0$ always denotes a constant whose value is immaterial and may change { from one line to the other}.

\subsection{Upper bounds} 

Recalling the second part of \eqref{ineq_dimensions}, it is enough to find an upper bound for $\mbox{\rm Den}_{pix } \,  E_\gamma  $, which will also be an upper bound for $\mathrm{Den}_{\log} E_\gamma$.

\medskip

Fix $\gamma\in (0,H)$,  and consider  $ \mbox{\rm Den}_{pix } \,  E_\gamma $. 
First,  we observe that
\begin{eqnarray*}
 \mathbb{E}(|\mbox{pix}(E_\gamma)\cap [1,2^n] |)  &=& \sum_{m=1}^{2^n} \mathbb{P}( \exists s\in [m-1,m+1], \ |B_s|\leq s^\gamma)\\
 &=& \sum_{m=1}^{2^n} \mathbb{P}( \exists s\in [1-\frac1m,1+\frac1m], \ |B_s|\leq s^\gamma m^{\gamma-H})\\
 &\leq& \sum_{m=1}^{2^n} (A^-_{1/m}+A^+_{1/m})
 \end{eqnarray*}
 where
 \begin{eqnarray*}
 A^-_\ep&:=& \mathbb{P}( \exists s\in [1-\ep,1], \ |B_s|\leq \ep^{H-\gamma})\\
 A^+_\ep&:=&\mathbb{P}( \exists s\in [1,1+\ep], \ |B_s|\leq 2 \ep^{H- \gamma}).
 \end{eqnarray*}

 \begin{lemma}
 \label{lemkey}
For  every $\e$ small enough ,
\begin{equation}
\label{eqmaj1}
 \max(A^-_\ep, A^+_\ep)   \leq  3 \ep^{H-\gamma}.
\end{equation}
 \end{lemma}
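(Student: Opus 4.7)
The plan is to treat $A^+_\ep$ and $A^-_\ep$ in parallel via a standard ``small-ball $+$ increment'' decomposition. Writing $\delta:=\ep^{H-\gamma}$ and letting $I_\ep$ denote either the forward interval $[1,1+\ep]$ or the backward interval $[1-\ep,1]$, I observe that if $|B_s|\leq 2\delta$ for some $s\in I_\ep$, then either $|B_1|\leq 3\delta$, or else $|B_1|>3\delta$ and consequently $|B_s-B_1|>\delta$ by the triangle inequality. Hence in both cases
\begin{equation*}
A^\pm_\ep \;\leq\; \mathbb{P}\bigl(|B_1|\leq 3\delta\bigr) \;+\; \mathbb{P}\!\left(\sup_{s\in I_\ep}|B_s-B_1|>\delta\right).
\end{equation*}

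The first term is the dominant one and is treated directly using the fact that $B_1\sim\mathcal{N}(0,1)$, whose density is bounded by $(2\pi)^{-1/2}$: this gives $\mathbb{P}(|B_1|\leq 3\delta)\leq 6(2\pi)^{-1/2}\ep^{H-\gamma}<2.4\,\ep^{H-\gamma}$. For the second term, I would exploit the self-similarity and stationarity of the increments of $B$. On the forward interval, $(B_{1+u}-B_1)_{u\in[0,\ep]}$ is an FBM of index $H$, so by self-similarity $\sup_{s\in[1,1+\ep]}|B_s-B_1|$ has the same law as $\ep^H\sup_{v\in[0,1]}|\tilde B_v|$ for a standard FBM $\tilde B$. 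On the backward interval, a short covariance computation using the explicit form of $R$ yields
\begin{equation*}
\mathbb{E}\bigl[(B_1-B_{1-u})(B_1-B_{1-v})\bigr] \;=\; R(u,v),
\end{equation*}
so $(B_1-B_{1-u})_{u\in[0,\ep]}$ is likewise an FBM of index $H$, and the same reduction applies. In either case the tail probability becomes $\mathbb{P}(\sup_{v\in[0,1]}|\tilde B_v|>\ep^{-\gamma})$, which by the Borell--TIS inequality (or equivalently Fernique) is bounded by $\exp(-c\ep^{-2\gamma})$ for some absolute constant $c>0$, and is therefore $o(\ep^{H-\gamma})$ since $\gamma>0$.

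Adding the two contributions yields $A^\pm_\ep \leq 2.4\,\ep^{H-\gamma} + o(\ep^{H-\gamma}) \leq 3\,\ep^{H-\gamma}$ for $\ep$ small enough, as required. I expect the only delicate point to be the verification that backward increments of FBM form an FBM again---a straightforward but slightly subtle computation on the covariance---after which everything reduces to a Gaussian small-ball estimate and the classical concentration bound for the supremum of a centered Gaussian process. No Markovian or local-time input is needed at this stage; this is in keeping with the paper's stated goal of detaching the dimensional analysis from the specific Markovian features used in \cite{SX}.
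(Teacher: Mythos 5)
Your proof is correct and follows essentially the same route as the paper's: the same decomposition into a small-ball event for $B_1$ (handled via the bounded Gaussian density) plus a large-increment event on $[1-\ep,1]$ or $[1,1+\ep]$ (handled by noting that the increments from time $1$ form an FBM, rescaling by self-similarity, and applying Borell--TIS to get a super-polynomially small bound, which requires $\gamma>0$ as in the paper's usage). The covariance check that $(B_1-B_{1-u})_{u}$ is again an FBM, which you flag as the delicate point, is exactly the step the paper invokes with the process $X_s=B_1-B_{1-s}$.
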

 
 \begin{proof}
 Let us consider $A^-_\ep$ first. We have
 \begin{eqnarray*}
 A^-_\ep&\leq&  \mathbb{P}( |B_1|\leq 2\ep^{H-\gamma})\\
 &&+ \mathbb{P}( \exists s\in [1-\ep,1], \ |B_s-B_1|\geq \ep^{H-\gamma}).
 \end{eqnarray*}
 The term $\mathbb{P}( |B_1|\leq 2\ep^{H-\gamma})$ is easily  bounded by $C\ep^{H-\gamma}$, so let us concentrate
 on the term $\mathbb{P}( \exists s\in [1-\e,1], \ |B_s-B_1|\geq \e^{H-\gamma})$.
   Set $X_s=B_{1}-B_{1-s}$, $s\in [0,1]$. Observe that $X$ is also a FBM. We have
  \begin{eqnarray*}
 &&\mathbb{P}( \exists s\in [1-\e,1], \ |B_s-B_1|\geq \e^{H-\gamma}) \\
 &=&\mathbb{P}( \exists s\in [0,1], \ |X_{\e s}|\geq \e^{H-\gamma})
 =\mathbb{P}( \exists s\in [0,1], \ |X_{ s}|\geq \e^{-\gamma})\\
&  =&\mathbb{P}( \sup_{s\in [0,1]}|X_{ s}|\geq \e^{-\gamma})
 \leq 2\,\mathbb{P}( \sup_{s\in [0,1]} X_{ s}\geq \e^{-\gamma})
 \end{eqnarray*}
 where last inequality makes use { of the fact} that $X\overset{\rm law}{=}-X$.
 It is well-known that, {by virtue of the Borell and Tsirelson-Ibragimov-Sudakov inequalties (see e.g \cite[Section 2.1]{AT})},
 setting $\alpha=\mathbb{E}\left[\sup_{[0,1]} B\right]$ and because $\mathbb{E}[B_s^2]=s^{2H}\leq 1$ for all $s\in[0,1]$:
 \begin{equation}\label{B-TIS}
 P(\sup_{[0,1]}X\geq u)\leq e^{-\frac{(u-\alpha)^2}{2}},\quad u\geq 0.
 \end{equation}
 (That $\alpha$ is finite is part of the result.)
We deduce that
\begin{eqnarray*}
\mathbb{P}( \exists s\in [1-\e,1], \ |B_s-B_1|\geq \e^{H-\gamma})&\leq& 2\,
e^{-\frac{(\e^{-\gamma}-\alpha)^2}{2}}=O(\e^{\delta}),
\end{eqnarray*}
for every $\delta>0$ when $\e$ becomes small enough. 
 Hence the result. {An analogous argument leads to the same estimate for the set} $A^+_\ep$.
\end{proof}

Going back to $A^-_{1/m}$ and $A^+_{1/m}$, we obtain {from Lemma \ref{lemkey}} that $$\max( A^-_{1/m}, A^+_{1/m}) =O(m^{\gamma-H}).$$ We consequently conclude that 
$$
 \mathbb{E}(|\mbox{pix}(E_\gamma)\cap [1,2^n] |)  \leq 
 \sum_{m=1}^{2^n} (A^-_{1/m}+A^+_{1/m}) = O(2^{n(\gamma+1-H)}).
$$
Choosing  $\rho> \gamma+1-H$, we have
\begin{eqnarray*}
\sum_{n\geq 1}  \mathbb{P}( |\mbox{pix}(E_\gamma)\cap [1,2^n] |>2^{n\rho} ) \leq C\sum_{n\geq 1}  \frac{2^{n(1+\gamma-H)}}{2^{n\rho}} <+\infty.
\end{eqnarray*}
Using the Borel-Cantelli lemma {we infer that}, with probability one, 
$$  |\mbox{pix}(E_\gamma)\cap [1,2^n] | \leq 2^{n\rho} $$
for every large enough integer $n$. Hence $ \mbox{\rm Den}_{pix} E_\gamma \leq \rho$.
Letting $\rho\downarrow \gamma+1-H$ leads to $ \mbox{\rm Den}_{pix} E_\gamma \leq \gamma+1-H$.

\qed

\begin{remark}
We could have proved directly the upper bound for $\mbox{\rm Den}_{\log} E_\gamma $ as follows. 
Introduce 
\begin{equation}
\label{defsgamma}
S_\gamma(t) =  {\rm Leb}\{0\leq s\leq t: |B_s|\leq s^\gamma\}.
\end{equation}
Its expectation 
can be estimated :
\begin{eqnarray}
\nonumber \mathbb{E}(S_\gamma(t)) & = &  \int_0^t \mathbb{P}(|B_s| \leq s^\gamma) \,ds 
 \nonumber  =  \int_0^t \mathbb{P}(|B_1|\leq s^{\gamma-H}) \, ds\\
 \label{eq1_seuret}& \sim & C t^{\gamma+1-H},
\end{eqnarray}
where the Fubini theorem, the self-similarity of $B$ and then the fact that $B_1\sim \mathcal{N}(0,1) $ have been successively used. 
The same argument (based on Borel-Cantelli) as the one used above to conclude that  $ \mbox{\rm Den}_{pix} E_\gamma \leq \gamma+1-H$, allows one to deduce the desired result.
\end{remark}


\subsection{Lower bounds} 

To obtain the announced lower bounds, we first evaluate the second moment of $S_\gamma(t)$ (defined in \eqref{defsgamma}). We have, with $\Sigma_{u,v}$ denoting the covariance matrix of $(B_u,B_v)$,
\begin{eqnarray*}
 &&\mathbb{E} (S_\gamma(t)^2)\\
  & = & \iint_{[0,t]^2} \mathbb{P}(|B_u|\leq u^\gamma, |B_v|\leq v^\gamma) \, dudv\\
& = & t^2 \iint_{[0,1]^2} \mathbb{P}(|B_u|\leq u^\gamma t^{\gamma-H}, |B_v|\leq v^\gamma t^{\gamma-H} )dudv\\
&=&\frac{t^2}{2\pi} \iint_{[0,1]^2} \frac{dudv}{\sqrt{\det\Sigma_{u,v}}}
\iint_{\R^2}e^{-\frac12
\left(x,y
\right)^T\Sigma_{u,v}^{-1}(x,y)
}
{\bf 1}_{\tiny
\left\{
\begin{array}{l}
|x|\leq u^\gamma t^{\gamma-H}
\\|y|\leq v^\gamma t^{\gamma-H}
\end{array}
\right\}
}
dxdy.
\end{eqnarray*}
Upper bounding $e^{-\frac12\{...\}}$, $u$ and $v$ by 1 and using that
(\ref{I}) is satisfied, we deduce that
\begin{equation}
\label{3}
\mathbb{E} (S_\gamma(t)^2)\leq C\,t^{2\gamma+2-2H}.
\end{equation}
Applying the Paley-Zygmund inequality { together with the estimate} \eqref{eq1_seuret}, we deduce from (\ref{3}) that, for any fixed $0<c<1$, there exists $c'>0$ such that 
\begin{eqnarray*}
 \mathbb{P} ( S_\gamma(2^n) \geq c 2^{n(\gamma+1-H)} )   \geq   (1-c) \frac { \mathbb{E} ( S_\gamma(2^n))^2}{  \mathbb{E} ( S_\gamma(2^n)^2) } \geq c'.
 \end{eqnarray*}
  The Borel-Cantelli lemma ensures that, for infinitely many integers $n$, $S_\gamma(2^n) \geq c 2^{n(\gamma+1-H)} $. {This fact implies that} $  \mathrm{Den}_{\log} E_\gamma   \geq \gamma+1-H$. Finally, using the right inequality in   \eqref{ineq_dimensions}, we directly obtain $\mbox{\rm Den}_{pix } \,  E_\gamma   \geq \gamma+1-H$.

\section{Proof of Theorem  \ref{th2}: value of $\dimh E_\gamma$}
\label{proof2}

\subsection{Upper bound for ${\rm Dim}_H E_\gamma$} 
In what follows, $c>0$ denotes a universal constant whose value is immaterial and may change from one line to another.

Let us fix   $0\le \gamma< H$, as well as  $\eta>0$ (as small as we want). We are going to prove that ${\rm Dim}_H E_\gamma\leq 1-H +\eta$. Letting $\eta$ tend to zero will then give the result.

\smallskip

Fix $\rho> 1-H+\eta$.
Consider for every integer $n\geq 1$ and $i\in \{0,..., \lfloor 2^{n-1}/2^{n\frac{\gamma}H}\rfloor \}$ the times
\[
t_{n,i} = 2^{n-1}+ i 2^{n\frac{\gamma}H} .
\]  
The {collection $t_{n,i}$ generates the intervals} $I_{n,i} = [t_{n,i}, t_{n,i+1})$, { together with the} associated event 
$$\mathcal{E}_{n,i}= \{\exists \, t\in I_{n,i}: |B_t|\leq t^\gamma\},$$

Set  $\ep_{n,i} =  2^{n\frac{\gamma}H}/t_{n,i}$, so that $I_{n,i}=[t_{n,i}, t_{n,i}(1+\ep_{n,i}))$, and observe that    
the ratio between any two of the quantities $2^{n(\frac{\gamma}H-1) }$, $ \ep_{n,i}$  and $ t_{n,i}^{\frac{\gamma}H-1} $ are bounded uniformly with respect to $n$ and $i$. By self-similarity, we have that, when $n$ becomes large,
\begin{align}
\nonumber\mathbb{P}(\mathcal{E}_{n,i}) &= \mathbb{P}(\exists \, \tau\in [1, 1+\ep_{n,i} ] :\,  |B_{\tau \cdot t_{n,i}}|\leq     (\tau\cdot t_{n,i})^\gamma ) \\
\nonumber&= \mathbb{P}(\exists \, \tau\in [1, 1+\ep_{n,i} ] :\,  |B_{t  }|\leq  t_{n,i}^{\gamma-H} \tau^\gamma )
\\
\nonumber&\leq \mathbb{P}(\exists \, \tau\in [1, 1+\ep_{n,i} ] :\,  |B_{t  }|\leq  2 t_{n,i}^{\gamma-H}  )
\\
\nonumber&\leq \mathbb{P}(\exists \, \tau\in [1, 1+\ep_{n,i} ] ,  |B_{t  }|\leq  c\, \ep_{n,i} ^{H} )
\\
\nonumber&\leq \mathbb{P}(\exists \, t\in [1, 1+\ep_{n,i} ] ,  |B_{t  }|\leq  \ep_{n,i} ^{H-\eta} ).
\end{align}
The last { estimate} holds because $\eta$ is a small positive real number and $\ep_{n,i}$ tends to zero. By Lemma \ref{lemkey}, we deduce that $  \mathbb{P}(\mathcal{E}_{n,i})   \leq   c\,  \ep_{n,i} ^{H-\eta}   $ and then
\begin{equation*}
  \mathbb{P}(\mathcal{E}_{n,i})   \leq c\, 2^{n(\gamma-H)\frac{H-\eta}{H}}.
  \end{equation*}
Now observe  that   $\mathcal{E}_{n,i}$ is realized if and only if $E_\gamma\cap I_{n,i} \neq\emptyset$. So, using the intervals $I_{n,i}$ as a covering of $E_\gamma\cap I_{n,i} \neq\emptyset$, we { obtain} from (\ref{def_nu}) that
  \begin{align*}
\mathbb{E}[\nu^n_\rho(E_\gamma)] &\leq  \mathbb{E}\left(\sum_{i=0}^{\lfloor 2^{n-1- n\frac{\gamma}H}\rfloor  }   \left(
\frac{
\mbox{Leb}(I_{n,i})
}{2^n}  \right) ^\rho {\bf 1 \!\!\! 1}_{\mathcal{E}_{n,i} }\right)\\ 
& \leq    2^{ \rho n( \frac{\gamma}H-1)} \sum_{i=0}^{\lfloor 2^{n-1- n\gamma/H}\rfloor  }    \mathbb{P}(\mathcal{E}_{n,i})  \\
& \leq  c\,  2^{  n \frac{ H-\gamma}{H} (1-H+\eta-\rho)} .
\end{align*}
Thus, {the} Fubini Theorem entails {$\mathbb{E}[\sum_{n=1}^\infty \nu_\rho^n(E_\gamma)] < + \infty $ as soon as $\rho >   1-H+\eta$. This implies that for such $\rho$'s,  the sum $\sum_{n=1}^\infty \nu_\rho^n(E_\gamma) $ is finite almost surely. In particular, $\dimh E_\gamma \leq \rho$ for every $\rho >   1-H+\eta$.   { Since such a relation holds for an arbitrary (small) $\rho>0$}, we deduce the desired conclusion.}

\subsection{Lower bound $\dimh E_\gamma\geq 1-H$} 
\label{proof2-3}

This lower bound  follows from the lower bound in Theorem \ref{th3}, { as proved in} Section \ref{proof3-3}. 

Indeed, assume that $\dimh \mathcal{L}_0 \geq 1-H$, which is an almost sure consequence of Theorem \ref{th3}. 
Obviously    $ \mathcal{L}_0 \subset E_\gamma$, hence  ${\rm Dim}_H E_\gamma \geq 1-H$, which is the desired conclusion.

\section{Proof of Theorem  \ref{th3}}
\label{proof3}

\subsection{A slight modification of $\dimh E$}

In this section, we  use a  slightly modified version of $\nu^n_\rho$ defined as
 \begin{eqnarray} 
 \tilde\nu_\rho^n (E) &=& \inf \left\{    \sum_{i=1}^m    \left( \frac{{\rm Leb}(I_i) }{2^n} \right)^\rho \left |\log _2 \frac{{\rm Leb}(I_i) }{2^n}\right|^{1-\rho} : \right.\nonumber \\
&&\left. \hskip1.3cm m\geq1, \  I_i\subset \sS_n, \ \  E\cap \sS_n \subset  \bigcup_{i=1}^m I_i  \right\}.
\label{defnut} 
  \end{eqnarray} 
The introduction of a logarithm factor makes some computations easier in Section \ref{proof3-3}. The quantities $\nu^n_\rho$ lead to the same notion of dimension. Indeed, it is  easily proved \cite{khoshnevisan2015multifractal,SX} that one can replace $\nu$ by $\tilde\nu$ in \eqref{defdimH}, so that 
\begin{equation}
\label{defdimt}
{\rm Dim}_H E =  \inf\left\{ \rho\ge 0 :  \sum_{n\ge 0} \tilde\nu^n_\rho(E)<+\infty   \right\}.
\end{equation}

\subsection{Upper bound for $\dimh \mathcal{L}_x$} 
 The argument { exploited in the present section} is comparable to the one used in Section \ref{proof2-3}.
 
Since every levet set $\mathcal{L}_x$ defined by \eqref{deflevel} is ultimately included in $E_\gamma$ for every $\gamma>0$, and since all the dimensions we consider do not depend of any bounded subset of $E_\gamma$,  we easily { obtain} from \eqref{ineq_dimensions} and Theorem \ref{th1}  that ${\rm Dim}_H \mathcal{L}_x \leq 1-H+\gamma$, for any $\gamma>0$. {Letting $\gamma\downarrow0$, one sees that ${\rm Dim}_H \mathcal{L}_x \leq 1-H$.}

\subsection{Lower bound for $\dimh \mathcal{L}_x$} 
\label{proof3-3}

Let us  now introduce  the random variables 
\begin{equation}
\label{defF}
Y^x_n = \frac{ L^{x2^{nH}}_{2^n} - L^{x2^{nH}}_{2^{n-1}}  } {2^{n(1-H)}  }  \ \ \mbox{ and } \ \  F^x_N:=  \sum_{n= 1}^N   Y^x_n.
\end{equation}
The random sequence $(F^x_N)_{N\ge 1}$ is non-decreasing and we denote by $F^x_\infty$ its limit, i.e. $F^x_\infty = \sum_{n\geq 1} Y^x_n $.

\noindent We remark from the self-similarity of $B$ that $Y^x_n \overset{d}{=} Y^x_0$, see formula (\ref{deflocal}).

\medskip

Let us start with a lemma connecting the r.v. $Y_n^x$ to the macroscopic Hausdorff dimension.

\begin{lemma}
\label{lem_minor}
With probability one, there exists a constant $K>0$ such that, for every $x\in \mathbb{R}$ and every $n\geq 1$,
$$\tilde \nu^n_{1-H}(\mathcal{L}_x ) \geq  K^{-1}   Y^x_n.$$
\end{lemma}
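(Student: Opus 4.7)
The plan is to exploit two properties of the local time $L^y$ of the FBM $B$: (i) since $B$ is continuous, the Stieltjes measure $dL^y$ in $t$ is carried by the level set $\mathcal{L}_y = \{t : B_t = y\}$, so $L^y$ only increases when $B$ visits $y$; and (ii) $L^y$ admits a uniform H\"older-type modulus of continuity in $t$. The self-similarity of $B$ is the bridge connecting these two facts with the rescaled quantity $Y^x_n$.

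For the rescaling, introduce $\widetilde B^{(n)}_s := 2^{-nH} B_{2^n s}$, itself an FBM of index $H$ on $[0,1]$, whose local time $\widetilde L^{(n)}$ satisfies
\begin{equation*}
L^{y}_{2^n \tau} - L^{y}_{2^n \sigma} \;=\; 2^{n(1-H)}\bigl(\widetilde L^{(n),\,y/2^{nH}}_{\tau} - \widetilde L^{(n),\,y/2^{nH}}_{\sigma}\bigr).
\end{equation*}
Taking $y = x\cdot 2^{nH}$ gives the identity $Y^x_n = \widetilde L^{(n),\,x}_1 - \widetilde L^{(n),\,x}_{1/2}$, which accounts for the invariance $Y^x_n \stackrel{d}{=} Y^x_0$. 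By property (i), for any admissible cover $\{I_i = [a_i,b_i]\}$ of the relevant level set intersected with $\mathcal{S}_n$,
\begin{equation*}
L^{x\cdot 2^{nH}}_{2^n} - L^{x\cdot 2^{nH}}_{2^{n-1}} \leq \sum_i \bigl(L^{x\cdot 2^{nH}}_{b_i} - L^{x\cdot 2^{nH}}_{a_i}\bigr),
\end{equation*}
which after rescaling reads $Y^x_n \leq \sum_i (\widetilde L^{(n),\,x}_{b_i/2^n} - \widetilde L^{(n),\,x}_{a_i/2^n})$.

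The technical heart is a uniform modulus of continuity: with probability one, there exists a finite constant $K$ such that, for every $n \ge 1$, every $y \in \mathbb{R}$, and every $[\sigma,\tau] \subset [0,1]$,
\begin{equation*}
\widetilde L^{(n),\,y}_{\tau} - \widetilde L^{(n),\,y}_{\sigma} \leq K\,(\tau-\sigma)^{1-H}\,|\log(\tau-\sigma)|^{H}.
\end{equation*}
For each fixed $n$, this is a Xiao-type modulus for the local time of an FBM on $[0,1]$, established via the Fourier representation \eqref{deflocal}, moment bounds of order $(\tau-\sigma)^{p(1-H)}$ that follow from the density integrability \eqref{I}, a chaining argument, and Gaussian concentration (Borell--TIS) to upgrade moments to sub-Gaussian tails. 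The uniformity in $n$ then comes from a Borel--Cantelli argument applied to the summable tails across the dyadic scales.

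Substituting the modulus into the covering inequality yields $Y^x_n \leq K\sum_i (\mathrm{Leb}(I_i)/2^n)^{1-H}\,|\log_2(\mathrm{Leb}(I_i)/2^n)|^{H}$, and taking the infimum over admissible covers gives the lemma. The main obstacle is producing the single almost-surely-finite constant $K$ that governs the modulus jointly across the entire family $\{\widetilde B^{(n)}\}_{n\ge 1}$, which is precisely where Gaussian concentration coupled with Borel--Cantelli plays the decisive role.
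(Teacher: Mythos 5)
Your overall architecture is the same as the paper's: rescale each annulus by self-similarity so that $Y^x_n$ becomes a local-time increment of an FBM on $[0,1]$, use the fact that the occupation density only charges the level set to bound $Y^x_n$ by the sum of local-time increments over any admissible cover, and then invoke a uniform H\"older modulus of the form $h^{1-H}|\log h|^{H}$ for those increments. The paper gets that modulus by citing Xiao's Theorem 1.2 (its quantity $X_n$ is exactly your per-scale modulus constant), so up to that point the two arguments coincide, including the shared notational slip that the cover is of $\mathcal{L}_x\cap\mathcal{S}_n$ while the local time being controlled is $L^{x2^{nH}}$ (these only match for $x=0$, the only case used afterwards).

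The genuine gap is in the step you yourself flag as decisive: producing one a.s.\ finite $K$ valid for every $n$. A Borel--Cantelli argument on ``summable tails across the dyadic scales'' cannot deliver this: by self-similarity the processes $\widetilde B^{(n)}$ all have the \emph{same law} on $[0,1]$, so the per-scale modulus constants $K_n$ are identically distributed, $\mathbb{P}(K_n>K)$ does not depend on $n$, and $\sum_n \mathbb{P}(K_n>K)$ is either $0$ or $+\infty$; it is not $0$, since $K_n$ dominates $c\,\sup_y(\widetilde L^{(n),y}_1-\widetilde L^{(n),y}_{1/2})$, an unbounded random variable. Worse, the uniform statement you aim for is false as written: applied with $[\sigma,\tau]=[1/2,1]$ and $y=x=0$ it would give $\sup_n Y^0_n\leq cK<\infty$ a.s., whereas $L^0_{2^n}\leq L^0_1 + C_H\, 2^{n(1-H)}\sup_{m\leq n}Y^0_m$ together with Kesten-type laws of the iterated logarithm for the local time at zero (for $H=\tfrac12$, $\limsup_{t\to\infty}L^0_t/\sqrt{2t\log\log t}=1$ a.s.) forces $\sup_n Y^0_n=+\infty$ a.s. So no amount of chaining or Gaussian concentration can close the step in the form you state it: the constant must be allowed to degrade with $n$ (e.g.\ by a $(\log\log 2^n)^{H}$ factor) or the modulus restricted to short intervals, and one must then verify that the divergence of $\sum_n\tilde\nu^n_{1-H}(\mathcal{L}_x)$ survives this loss. (The same tension is already latent in the paper's own appeal to Xiao's theorem for the uniform bound \eqref{bound1}; your write-up exposes exactly where the difficulty sits, but the mechanism you propose does not resolve it.)
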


\begin{proof}
We start by recalling a key result of Xiao (see \cite[Theorem 1.2]{Xiao_localtimes}), which describes the scaling behavior of the local times of stationary Gaussian processes. For this, let us introduce the random variables 
$$X_n:= \sup_{0\leq t\leq 2^n} \sup_{0\leq h\leq 2^{n-1}} \sup_{x\in \mathbb{R}} \frac{L^x_{t+h}-L^x_t}{h^{1-H}(n-\log_2h)^H}.$$
Self-similarity of $B$ implies 
\begin{eqnarray*}
 X_n 
 &= &\sup_{0\leq t\leq 1} \, \sup_{0\leq h\leq 1/2} \, \sup_{x\in \mathbb{R}} \frac{L^{x2^{nH}}_{2^n(t+h)}-L^{x2^{nH}}_{2^nt}}{(2^nh)^{1-H}(-\log_2h)^H}\\
 &  \overset{d}{=} & \sup_{0\leq t\leq 1} \,  \sup_{0\leq h\leq 1/2} \, \sup_{x\in \mathbb{R}} \frac{L^x_{t+h}-L^x_{t}}{h^{1-H}(-\log_2h)^H}
\end{eqnarray*}
By \cite[Theorem 1.2]{Xiao_localtimes}, with probability one there exists a constant $K>0$ such that 
\begin{equation}
\label{bound1}
\mbox{for every $n\geq 1$, } \ \ X_n \leq K.
\end{equation}
Now fix $x\in \mathbb{R}$, and consider the associated level set $\mathcal{L}_x$ defined by \eqref{deflevel}. Recall the definition  \eqref{defnut} of $ \tilde\nu ^n_{1-H}(\mathcal{L}_x)$.  Choose a covering $(I_i)_{i=1,...,m}$  that minimizes the value in \eqref{defnut}, and set $I_i=[x_i,y_i]$. We observe that
\begin{eqnarray*}
\tilde \nu^n_{1-H}(\mathcal{L}_x ) &=&    \sum_{i=1}^m    \left( \frac{{\rm Leb}(I_i) }{2^n} \right)^{1-H} \left |\log _2 \frac{{\rm Leb}(I_i) }{2^n}\right|^{H} \\
 &=&    \sum_{i=1}^m     \left( \frac{| y_i-x_i | }{2^n} \right)^{1-H} \left |\log_2  \frac{| y_i-x_i | }{2^n}\right|^{H}\\
&\geq & K^{-1} \sum_{i=1}^m    \frac{ L^{x2^{nH}}_{y_i}-  L^{x2^{nH}}_{x_i}   } {2^{n(1-H)}   }
=  K^{-1} \sum_{i=1}^m    \frac{ L^{x2^{nH}} (I_i)   } {2^{n(1-H)}   } \\
&\geq & K^{-1}      \frac{ L^{x2^{nH}}_{2^n} - L^{x2^{nH}}_{2^{n-1}}  } {2^{n(1-H)}   }
\end{eqnarray*}
where \eqref{bound1} has been used to get the first inequality, and the last inequality { holds because} the local time $L_.^x$ increases only on the sets $I_i$ (whose union covers $\mathcal{L}_x\cap \mathcal{S}_n$). This proves the claim.
\end{proof}

\begin{remark}
The introduction of $\tilde\nu^n_\rho $ instead of $\nu^n_\rho$ in  \eqref{defdimt} is key in the last sequence of inequalities { displayed in the previous proof}, {allowing us} to use in a relevant way Xiao's result \eqref{bound1}.
\end{remark}

Now, using Lemma \ref{lem_minor}, and recalling \eqref{defdimt}, in order to conclude that $\dimh \mathcal{L}_x \geq 1-H$ and Theorem \ref{th3}, it is enough to prove that the series $\sum_{n\geq 1}Y^x_n$ diverges almost surely. This is the purpose of the next proposition.

\begin{proposition}
\label{propproba1}
For all
$x\in\R$,
\begin{equation}
\label{proba1}
\mathbb{P} (F^x_\infty =+\infty) =1.
\end{equation}
\end{proposition}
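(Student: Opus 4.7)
The plan is to show that $F^x_N \to +\infty$ in probability and then upgrade to almost sure divergence using the monotonicity of $(F^x_N)_{N\geq 1}$, which holds because each $Y^x_n$ is non-negative. Specifically, if $\mathbb{P}(F^x_N > M) \to 1$ for every $M > 0$, then since $F^x_N \leq F^x_\infty$ we get $\mathbb{P}(F^x_\infty > M) = 1$ for every $M$, and hence $\mathbb{P}(F^x_\infty = +\infty) = 1$. The in-probability convergence will follow from a second-moment estimate: I will establish that $\mathbb{E}[F^x_N] \sim c_x N$ with $c_x > 0$, and that $\mathrm{Var}(F^x_N) = O(N) = o\bigl((\mathbb{E}[F^x_N])^2\bigr)$, whereupon Chebyshev concludes.

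For the mean, the scaling identity $Y^x_n \overset{d}{=} Y^x_0$ noted just after \eqref{defF} combined with the occupation density formula give
\[
\mathbb{E}[Y^x_n] \;=\; \mathbb{E}[L^x_1 - L^x_{1/2}] \;=\; \int_{1/2}^1 p_u(x)\,du \;=:\; c_x,
\]
where $p_u$ is the density of $B_u \sim \mathcal{N}(0,u^{2H})$. Clearly $c_x > 0$, so $\mathbb{E}[F^x_N] = c_x N$.

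The crux is the covariance estimate. For $m > n$, the occupation density formula yields
\[
\mathbb{E}[Y^x_n Y^x_m] \;=\; \frac{1}{2^{(n+m)(1-H)}} \int_{2^{n-1}}^{2^n}\!\!\int_{2^{m-1}}^{2^m} p_{s,t}\bigl(x\,2^{nH},\, x\,2^{mH}\bigr)\,ds\,dt,
\]
with $p_{s,t}$ the joint density of $(B_s, B_t)$. Setting $k := m-n$, substituting $s = 2^n u$, $t = 2^m v$, and invoking the self-similarity identity $(B_{2^n u}, B_{2^m v}) \overset{d}{=} 2^{nH}(B_u, B_{2^k v})$ reduces the integral to
\[
2^{kH}\int_{[1/2,1]^2} q_{u,\,2^k v}\bigl(x,\, x\,2^{kH}\bigr)\,du\,dv,
\]
where $q$ is the relevant bivariate FBM density. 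A Taylor expansion of $(2^k v-u)^{2H}$ around $u/(2^k v)=0$ shows that the correlation coefficient $\rho_{u,v,k}$ of $(B_u, B_{2^k v})$ satisfies $|\rho_{u,v,k}| \leq c\,2^{-k\beta}$ with $\beta := \min(H, 1-H) > 0$. Expanding the explicit bivariate Gaussian density about $\rho = 0$ then gives
\[
q_{u,\,2^k v}\bigl(x,\, x\,2^{kH}\bigr) \;=\; 2^{-kH}\, p_u(x)\, p_v(x)\bigl(1 + O(2^{-k\beta})\bigr)
\]
uniformly in $(u,v) \in [1/2,1]^2$, which after integration gives $\mathbb{E}[Y^x_n Y^x_m] = c_x^2 + O(2^{-\beta|m-n|})$ and hence $|\mathrm{Cov}(Y^x_n, Y^x_m)| \leq c\,2^{-\beta|m-n|}$. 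The diagonal terms $\mathrm{Var}(Y^x_n) = \mathrm{Var}(Y^x_0)$ are finite thanks to condition \eqref{I}.

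Summing the geometric bound gives $\mathrm{Var}(F^x_N) = O(N)$, and the argument closes as outlined above. The main technical obstacle is the uniform Gaussian expansion: one must control the remainder $O(2^{-k\beta})$ uniformly over $(u,v) \in [1/2,1]^2$ and verify that the implicit constants depend harmlessly on $x$. This boils down to standard bivariate Gaussian asymptotics, simplified by the fact that after the rescaling the evaluation level $x$ no longer depends on $k$, so the only $k$-dependence enters through the vanishing correlation.
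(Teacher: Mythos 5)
Your argument is correct in outline and takes a genuinely different route from the paper. The paper proceeds in two \emph{soft} steps: first a Jeulin-type lemma (using only that the $Y^x_n$ are identically distributed with $\mathbb{P}(Y^x_0>0)>0$) shows $\mathbb{P}(F^x_\infty=+\infty)>0$; then the time-inversion $\widetilde B_u=u^{2H}B_{1/u}$ turns $\{F^x_\infty=+\infty\}$ into a germ event at time $0$, which is trivial by Blumenthal's $0$--$1$ law applied to the Brownian motion underlying the Volterra representation \eqref{W}. You instead run a quantitative second-moment method: $\mathbb{E}[F^x_N]=c_xN$ and $\mathrm{Var}(F^x_N)=O(N)$ via the decorrelation bound $|\mathrm{Cov}(Y^x_n,Y^x_m)|\le c\,2^{-\beta|m-n|}$, then Chebyshev plus monotonicity. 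I checked your scaling reduction (the prefactor $2^{kH}$ is right, and the correlation of $(B_u,B_{2^kv})$ is indeed $O(2^{-k\min(H,1-H)})$ since the two contributions are of orders $2^{-kH}$ and $2^{-k(1-H)}$), and the Gaussian expansion $q_{u,2^kv}(x,x2^{kH})=2^{-kH}p_u(x)p_v(x)(1+O(2^{-k\beta}))$ is uniform on $[1/2,1]^2$ because the two times are separated by a factor $\ge 2^{k-1}$. A complete writeup would need three routine additions: (a) justify the first- and second-moment formulas for local time increments from the Fourier representation \eqref{deflocal} together with \eqref{I} (the paper does (a) for the first moment in Lemma \ref{probapositive}); (b) note that $\mathrm{Var}(Y^x_0)<\infty$ follows from \eqref{I} restricted to $[1/2,1]^2$; (c) handle small gaps $k\le k_0$, where the Taylor parameter $u/(2^kv)$ is not small, by Cauchy--Schwarz rather than by the expansion. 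What each approach buys: the paper's argument avoids all bivariate density estimates but needs the filtration property \eqref{W} and the time-inversion lemma; yours is self-contained at the level of covariance computations and actually yields the stronger statement $F^x_N/N\to c_x$ in probability (a weak law of large numbers), which the paper's qualitative argument does not give.
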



The proof of Proposition \ref{propproba1} makes use of various arguments involving local times, Brownian filtration and Kolmogorov 0-1 law. As a preliminary step, we start with the following lemma,  showing that the previous probability is strictly positive. {Our key argument can be seen as a variation of the celebrated {\it Jeulin's Lemma} \cite[p. 44]{J}, allowing one to deduce the convergence of random series (or integrals), by controlling deterministic series of probabilities.}

\begin{lemma}
\label{probapositive}
For every $x\in\R$, one has that
\begin{equation}
\label{eq2s}
\mathbb{P} (F^x_\infty =+\infty)  >0.
\end{equation}
\end{lemma}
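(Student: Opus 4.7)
The plan is to run a second-moment / Paley--Zygmund argument, exploiting the fact that FBM self-similarity forces all the summands $Y^x_n$ to be identically distributed, with a common law having strictly positive mean and finite second moment.

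First I would identify the law of each $Y^x_n$. The self-similarity $B_{ct}\stackrel{d}{=}c^H B_t$ translates at the level of occupation densities into the joint scaling relation $(L^{c^H y}_{cs})_{y,s}\stackrel{d}{=}(c^{1-H} L^y_s)_{y,s}$. Applying this with $c=2^n$, $y=x$ and evaluating at $s=1$ and $s=1/2$, I obtain
\[
L^{x2^{nH}}_{2^n}-L^{x2^{nH}}_{2^{n-1}}\;\stackrel{d}{=}\;2^{n(1-H)}\bigl(L^x_1-L^x_{1/2}\bigr),
\]
so that $Y^x_n\stackrel{d}{=}Z^x:=L^x_1-L^x_{1/2}$ for every $n\geq 1$.

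Next I would establish two moment estimates for $Z^x$. For the first moment, the occupation density formula together with $B_s\sim\mathcal{N}(0,s^{2H})$ gives
\[
c:=\mathbb{E}[Z^x]=\int_{1/2}^1\frac{e^{-x^2/(2s^{2H})}}{s^H\sqrt{2\pi}}\,ds\in(0,\infty).
\]
For the second moment, using that $(B_u,B_v)$ is Gaussian with covariance matrix $\Sigma_{u,v}$, the representation
\[
\mathbb{E}[(L^x_1)^2]=\iint_{[0,1]^2}p_{u,v}(x,x)\,du\,dv\leq\frac{1}{2\pi}\iint_{[0,1]^2}\frac{du\,dv}{\sqrt{R(u,u)R(v,v)-R(u,v)^2}}
\]
is finite by the standing hypothesis \eqref{I}; consequently $M_2:=\mathbb{E}[(Z^x)^2]<\infty$. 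I expect this to be the only step requiring genuine care, since it is where the integrability assumption \eqref{I} enters.

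With both moments in hand, the Paley--Zygmund inequality finishes the argument. Since the $Y^x_n$ all have law $Z^x$, one has $\mathbb{E}[F^x_N]=Nc$, while Cauchy--Schwarz on each cross-term yields $\mathbb{E}[(F^x_N)^2]\leq N^2 M_2$. Hence
\[
\mathbb{P}\bigl(F^x_N\geq Nc/2\bigr)\geq\frac{1}{4}\cdot\frac{(Nc)^2}{N^2 M_2}=\frac{c^2}{4M_2}=:\delta>0,
\]
\emph{uniformly} in $N$. Because $(F^x_N)_{N\geq 1}$ is non-decreasing, for any $M>0$ the choice $N_M:=\lceil 2M/c\rceil$ gives $\{F^x_{N_M}\geq N_M c/2\}\subset\{F^x_\infty\geq M\}$, whence $\mathbb{P}(F^x_\infty\geq M)\geq\delta$ for every $M$. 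Letting $M\to\infty$ yields $\mathbb{P}(F^x_\infty=+\infty)\geq\delta>0$, which is exactly \eqref{eq2s}. The passage from this positive-probability statement to the full almost-sure divergence claimed in Proposition \ref{propproba1} will then rely on Kolmogorov's $0$--$1$ law applied in the Brownian filtration \eqref{W}, as foreshadowed in the text.
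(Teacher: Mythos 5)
Your proof is correct, but it follows a genuinely different route from the paper's. The paper avoids second moments altogether: after computing $\mathbb{E}(L^x_1-L^x_{1/2})>0$ exactly as you do (hence $\mathbb{P}(Y^x_0>0)>0$), it runs a Jeulin-type argument on the event $A=\{F^x_\infty\le\gamma\}$, writing $\gamma\ge\mathbb{E}(\mathbf{1}_AF^x_\infty)=\sum_n\int_0^\infty\mathbb{P}(A\cap\{Y^x_n>u\})\,du$ and using the identical distribution of the $Y^x_n$ to force each (non-negative, $n$-independent) summand to vanish; this yields the cleaner bound $\mathbb{P}(F^x_\infty=+\infty)\ge\mathbb{P}(Y^x_0>0)$ using only the first moment and equidistribution. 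Your Paley--Zygmund argument instead requires $\mathbb{E}[(L^x_1)^2]<\infty$, which you correctly reduce to the standing integrability hypothesis \eqref{I} via the bound $p_{u,v}(x,x)\le\bigl(2\pi\sqrt{\det\Sigma_{u,v}}\bigr)^{-1}$; all the remaining steps (the scaling identity $Y^x_n\stackrel{d}{=}L^x_1-L^x_{1/2}$, the crude bound $\mathbb{E}[(F^x_N)^2]\le N^2M_2$ by Cauchy--Schwarz on cross-terms, the uniform-in-$N$ lower bound $\delta=c^2/(4M_2)$, and the monotone passage to $\{F^x_\infty=+\infty\}$) are sound. What you lose is generality and sharpness of the constant (the paper's method would survive if the local time only had a first moment); what you gain is a more standard and self-contained second-moment argument that does not invoke the exchange of sum and integral over the event $A$. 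Either way, positivity is all that is needed, since the time-inversion and Blumenthal $0$--$1$ law argument upgrades it to probability one.
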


\begin{proof}

Recalling  \eqref{deflocal} we have, for every $s\leq t$, 
\begin{eqnarray*}
 L^x_t-L^x_s =   \frac{1}{2\pi}  \int_\mathbb{R} dy  \,e^{-iyx} \int_s^t du\, e^{iyB_u}.
\end{eqnarray*}
Using the self-similarity of $B$ through $\mathbb{E}(B_u^2)=u^{2H}\mathbb{E}(B_1) = u^{2H}$, we deduce:
\begin{eqnarray*}
 &&\mathbb{E}(L^x_t-L^x_s)  =     \frac{1}{2\pi}  \int_\mathbb{R} dy\,e^{-iyx}  \int_s^t du\,e^{-\frac12y^2u^{2H}} \\
 &=  &   \frac{1}{2\pi} \int_s^t du \int_\mathbb{R} dy\,e^{-iyx}  \,e^{-\frac12y^2u^{2H}} 
  =  \frac{1}{\sqrt{2\pi}}    \int_s^t e^{-\frac{x^2}{2u^{2H}}} u^{-H}  du.
\end{eqnarray*}
We observe in particular that $ \mathbb{E}(L^x_1-L^x_\frac12) >0$, so  
\begin{equation}\label{pos}
{P}(Y^x_0>0) =\mathbb{P}(L^x_1-L^x_\frac12>0)  >0.
\end{equation}
Now fix $\gamma>0$, and consider the event $A=\{F^x_\infty \leq \gamma\}$. We have, by Fubini,
\begin{eqnarray*}
 \gamma & \geq & \mathbb{E}({\bf 1 \!\!\!1} _A F^x_\infty) = \sum_{n\geq 1} \mathbb{E} ({\bf 1 \!\!\!1}_A Y^x_n)
= \sum_{n\geq 1}\int_0^{+\infty}  \mathbb{P} \left(  A \cap \{Y^x_n>u\} \right)du.
\end{eqnarray*}
Using $\mathbb{P}(A\cap B)\geq (\mathbb{P}(A)-\mathbb{P}(B^c))_+$, we deduce
that 
\begin{eqnarray*}
 \gamma & \geq & \sum_{n\geq 1}\int_0^{+\infty}  (\mathbb{P}(A) -  \mathbb{P}Y^x_n\leq u))_+ du\\
 &=&\sum_{n\geq 1}\int_0^{+\infty}  (\mathbb{P}(A) -  \mathbb{P}(Y^x_0\leq u))_+ du.
\end{eqnarray*}
Since the summand does not depend on $n$, the only possibility is that it is zero, that is,
$$\int_0^{+\infty}  (\mathbb{P}(F^x_\infty \leq \gamma) -  \mathbb{P}(Y^x_0\leq u))_+ du=0.$$ 
This implies,
for almost every $u\geq 0$ and every $\gamma>0$:
$$\mathbb{P}(F^x_\infty \leq \gamma) \leq   \mathbb{P}(Y^x_0\leq u).$$
Letting $\gamma\to +\infty$ together with $u \to 0^+$, and recalling (\ref{pos}), we conclude that 
 $$\mathbb{P} (F^x_\infty =+\infty) \geq \mathbb{P}(Y^x_0> 0) >0,$$
which is exactly { the desired relation} (\ref{eq2s}).
\end{proof}

\medskip

It remains us to prove that not only $\mathbb{P} (F^x_\infty =+\infty)$ is strictly positive for every $x$, but in fact it equals 1. {Such a conclusion will follow from the next statement, corresponding to a time-inversion property of FBM}. It can be checked immediately by computing the covariance function of the process $\widetilde{B}$ {introduced below}.

\begin{lemma}
\label{lem2}
The reversed time process $\widetilde B$
\begin{equation}
\label{defbtilde}
t\longmapsto \widetilde B_u:=u^{2H}B_{1/u}
\end{equation}
is also a FBM.
\end{lemma}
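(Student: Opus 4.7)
The plan is to verify that $\widetilde B$ has the defining characteristics of an FBM of index $H$, namely that it is a centered continuous Gaussian process whose covariance function coincides with $R(u,v)=\frac12(u^{2H}+v^{2H}-|u-v|^{2H})$. Since these properties determine the law of an FBM (a centered Gaussian process is characterized by its covariance, and FBM is defined as the continuous Gaussian process with the above covariance), the lemma reduces to a direct computation plus a brief continuity check.

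First I would observe that for each fixed $u>0$, $\widetilde B_u = u^{2H}B_{1/u}$ is a deterministic linear functional of the Gaussian family $\{B_t:t>0\}$; hence every finite linear combination of values of $\widetilde B$ is Gaussian, and $\mathbb{E}[\widetilde B_u]=u^{2H}\mathbb{E}[B_{1/u}]=0$. So $\widetilde B$ (restricted to $u>0$) is a centered Gaussian process.

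Next I would carry out the covariance computation: for $u,v>0$,
\[
\mathbb{E}[\widetilde B_u\widetilde B_v]
= u^{2H}v^{2H}\,\mathbb{E}[B_{1/u}B_{1/v}]
= \tfrac{u^{2H}v^{2H}}{2}\!\left(u^{-2H}+v^{-2H}-\Bigl|\tfrac{1}{u}-\tfrac{1}{v}\Bigr|^{2H}\right).
\]
The last term simplifies as $u^{2H}v^{2H}|1/u-1/v|^{2H}=|v-u|^{2H}$, so the whole expression collapses to $\frac12(v^{2H}+u^{2H}-|v-u|^{2H})=R(u,v)$. This is the FBM covariance, which immediately gives self-similarity of index $H$ and stationary increments at the level of finite-dimensional distributions.

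Finally, for continuity, on $(0,\infty)$ it is inherited from the continuity of $B$ together with the continuous factor $u^{2H}$, so the only delicate point is the behavior at $u=0^+$. Here I would invoke the classical a.s.\ growth estimate $B_t=o(t^{H+\eta})$ as $t\to\infty$ for every $\eta>0$ (the same fact mentioned in the introduction to justify restricting to $\gamma\in[0,H]$), which yields $\widetilde B_u=u^{2H}B_{1/u}=o(u^{H-\eta})\to 0$ almost surely. This shows the natural continuous modification of $\widetilde B$ extends to $u=0$ with $\widetilde B_0=0$, completing the identification of $\widetilde B$ as an FBM of index $H$. There is no real obstacle here: the only place requiring care is the $u=0$ continuity, which is handled by the standard long-time growth bound for FBM.
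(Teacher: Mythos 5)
Your proposal is correct and follows essentially the same route as the paper, which simply notes that the lemma "can be checked immediately by computing the covariance function of $\widetilde B$"; your covariance computation (with the key simplification $u^{2H}v^{2H}|1/u-1/v|^{2H}=|v-u|^{2H}$) is exactly that check. The extra care you take with continuity at $u=0^+$ via the growth bound $B_t=o(t^{H+\eta})$ is a reasonable supplement that the paper leaves implicit.
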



Let us denote by $\widetilde L^x_t$, $\widetilde Y^x_n $ and $\widetilde F^x_N$ the quantities analogous to  $  L^x_t$, $  Y^x_n $ and $  F^x_N$ defined in \eqref{defF}, but  associated with $\widetilde B$ (see \eqref{defbtilde}) instead of $B$. Obviously, $(L^x_t)_{x\in\R,t\geq 0}$ and $(\widetilde L^x_t)_{x\in\R,t\geq 0}$ have the same law. 
So 
$$F^x_\infty \overset{d}{=} \widetilde F^x_\infty  :=      \sum_{n= 1}^{+\infty}  \ \frac{\widetilde L^{2^{nH}x}_{2^n} - \widetilde L^{2^{nH}x}_{2^{n-1}}  } {2^{n(1-H)}  } .$$
For a fixed integer $n\geq 1$,  we have 
$$\widetilde L^{x2^{nH}}_{2^n} - \widetilde L^{x2^{nH}}_{2^{n-1}}  =\frac{1}{2\pi} \int_{\mathbb{R}} dy \,e^{-iy2^{nH}x}\int_{2^{n-1}}^{2^n} du\, e^{iyu^{2H}B_{1/u}},$$
implying in turn that $\widetilde L^{x2^{nH}}_{2^n} - \widetilde L^{x2^{nH}}_{2^{n-1}} $ is $\sigma\{B_u:u\leq 2^{-(n-1)}\}$-measurable.
As a consequence, for every $M\geq 1$,
$$\sigma\left\{ \widetilde L^{x2^{nH}}_{2^n} - \widetilde L^{x2^{nH}}_{2^{n-1}}  : n\geq M\right\}\subset \sigma\{B_u:u\leq 2^{-(M-1)}\}.$$
The event $\{\widetilde F^x_\infty  =+\infty\}$ does not depend on the first term of the series, so is a tail event. Otherwise stated,
$$\{\widetilde F^x_\infty  =+\infty\} \in \bigcap_{M\geq 1} \sigma\left\{ \widetilde L^{x2^{nH}}_{2^n} - \widetilde L^{x2^{nH}}_{2^{n-1}}  : n\geq M\right\} .$$
Using now (\ref{W}) (with $\widetilde{B}$ instead of $B$), we deduce that
$$\{\widetilde F^x_\infty  =+\infty\} \in \bigcap _{M\geq 1} \sigma\{W_u:u\leq 2^{-M}\},$$
where $W$ is a standard Brownian motion. By the Blumenthal's 0-1 law for $W$, we infer that $\mathbb{P}(\widetilde F^x_\infty  =+\infty)$ is either 0 or 1.  Remembering Lemma \ref{probapositive}, we can conclude that this probability is one, which implies \eqref{proba1} as claimed.\qed


\bigskip
 
\noindent {\bf Acknowledgement}. We thank Ciprian Tudor for suggesting that it may be useful to rely on the representation (\ref{deflocal}) of the local time, and to Yimin Xiao {for useful discussions around the (wide) literature generated by his seminal work
\cite{Xiao_localtimes}. }
 
\bibliographystyle{plain}

\begin{thebibliography}{10}

\bibitem{AT}
\newblock R.~J. Adler and J.~E. Taylor.
\newblock {\it Random fields and geometry}.
\newblock Springer Verlag, 2007.


\bibitem{ayache}
\newblock A. Ayache,
\newblock 
Hausdorff dimension of the graph of the Fractional Brownian Sheet
\newblock {\em Rev. Mat. Iberoamericana}
Volume 20, Number 2 (2004), 395-412.

\bibitem{barlow1989}
M.~T. Barlow and S.~J. Taylor.
\newblock Fractional dimension of sets in discrete spaces.
\newblock {\em J. Phys. A}, 22(13), 2621--2628, 1989. 

\bibitem{barlow1992}
M.~T. Barlow and S.~J. Taylor.
\newblock Defining fractal subsets of {${\bf Z}\sp d$}.
\newblock {\em Proc. London Math. Soc. (3)}, 64(1), 125--152, 1992.

\bibitem{BeghinNikitinOrsingher03}
L.~Beghin, Y.~Nikitin, and E.~Orsingher.
\newblock How the sojourn time distributions of {B}rownian motion are affected
  by different forms of conditioning.
\newblock {\em Statist. Probab. Lett.}, 65(4), 291--302, 2003.

\bibitem{Berman69}
S.~M. Berman.
\newblock Local Times and Sample Function Properties of Stationary Gaussian Processes
\newblock  {\em Transactions of the American Mathematical Society}
Vol. 137 (Mar., 1969), 277-299.



\bibitem{Berman88}
S.~M. Berman.
\newblock Extreme sojourns of diffusion processes.
\newblock {\em Ann. Probab.}, 16(1), 361--374, 1988.


\bibitem{Bermann91}
S.~M. Berman.
\newblock Spectral conditions for sojourn and extreme value limit theorems for
  {G}aussian processes.
\newblock {\em Stochastic Process. Appl.}, 39(2), 201--220, 1991.

\bibitem{CiesielskiTaylor62}
Z.~Ciesielski and S.~J. Taylor.
\newblock First passage times and sojourn times for {B}rownian motion in space
  and the exact {H}ausdorff measure of the sample path.
\newblock {\em Trans. Amer. Math. Soc.}, 103, 434--450, 1962.
 
 
\bibitem{KS}
\newblock I. ~Karatzas and S. ~Shreve.
\newblock {\it Brownian motion and stochastic calculus}.
\newblock Springer Verlag, 1998. 
 
\bibitem{khoshnevisan1994}
D. ~Khoshnevisan.
\newblock A discrete fractal in {${\bf Z}\sp 1\sb +$}.
\newblock {\em Proc. Amer. Math. Soc.}, 120(2):577--584, 1994.

\bibitem{khoshnevisan2015multifractal}
D. ~Khoshnevisan, K. ~Kim, and Y. ~Xiao.
\newblock Intermittency and multifractality: A case study via parabolic
  stochastic pdes.
\newblock {\em Ann. Probab. } 45, no. 6A, 3697--3751, 2017.


\bibitem{khoshxiao}
D.~ Khoshnevisan and Y.~ Xiao.
\newblock  Additive L\'evy Processes: Capacity and Hausdorff Dimension 
\newblock Progress in Probability, Vol. 57, 151--170  2004 Birkh\"auser Verlag Basel/Switzerland

\bibitem{khoshnevisanXiao2016}
D.~ Khoshnevisan and Y.~ Xiao.
\newblock On the macroscopic fractal geometry of some random sets.
\newblock  {\it Stochastic Analysis and Related Topics}, Progress in Probability, Vol. 72, 179-206, Birkh\"auser, Springer International AG 2017 (Fabrice Baudoin and Jonathan Peterson, editors)

\bibitem{J}
Th. Jeulin,
\newblock {\em Semi-martingales et grossissement d'une filtration}.
\newblock Springer Verlag, 1980.


\bibitem{MP}
\newblock P. M\"orters and Y. Peres,
\newblock {\em Brownian motion}.
\newblock Cambridge University Press, 2010.

\bibitem{pruitt69}
 \newblock  W. E. Pruitt,
 \newblock The Hausdorff dimension of the range of a process with stationary independent
increments, 
 \newblock J. Math. Mech. 19, 371–378 (1969).


\bibitem{pruitttaylor}
 \newblock W. E. Pruitt, S. J. Taylor,
 \newblock  Packing and covering indices for a general L´evy process, 
  \newblock  {\em Ann.
Probab.} 24, 971–986 (1996).



\bibitem{Ray63}
 \newblock D.~ Ray.
\newblock { Sojourn times and the exact {H}ausdorff measure of the sample path
  for planar {B}rownian motion.}
\newblock {\em Trans. Amer. Math. Soc.}, 106:436--444, 1963.


\bibitem{RY}
 \newblock D. Revuz and M. Yor.
\newblock { Continuous martingales and brownian motion.}
\newblock Springer Verlag, 1999.


\bibitem{SX} 
 \newblock S. Seuret, X. Yang, 
\newblock { On sojourn of Brownian motion inside moving boundaries}
\newblock {\em Stochastic Processes and their Applications}, to appear 2018.  


\bibitem{shiehxiao} 
 \newblock N.-R. Shieh, Y. Xiao,
\newblock 
Hausdorff and packing dimensions of the images of random fields. 
\newblock {\em Bernoulli} 16 (2010), 926--952. 


 \bibitem{taylor53}
  \newblock S. J. Taylor. 
 \newblock {The Hausdorff α-dimensional measure of Brownian paths in n-space.}
 \newblock {\em Proc. Cambridge
Philos. Soc.} 49, 31–39 (1953).

\bibitem{taylor55}
 \newblock  S. J. Taylor. 
 \newblock {The $\alpha$-dimensional measure of the graph and set of zeros of a Brownian path.}
 \newblock {\em Proc.
Cambridge Philos. Soc.} 51, 265–274 (1955).

\bibitem{uchiyama1982}
 \newblock K. Uchiyama.
\newblock {The proportion of {B}rownian sojourn outside a moving boundary.}
\newblock {\em Ann. Probab.}, 10(1), 220--233, 1982.


\bibitem{Xiao_localtimes} 
 \newblock Y. Xiao. 
\newblock {H\"older conditions for the local times and the Hausdorff measure of the level sets of Gaussian random fields. }
\newblock {\em Probab. Theory Relat. Fields} 109, 129--157, 1997.

\bibitem{yang}
\newblock X. Yang,
 \newblock Hausdorff dimension of the range and the graph of stable-like processes. 
 \newblock {\em Journal of Theoretical Probability.} In press.
\newblock DOI: 10.1007/s10959-017-0784-y

\end{thebibliography}

\end{document}